\newtheorem{definition}{\bf Definición}[section]
\newtheorem{Lem}[definition]{\bf Lemma}
\newtheorem{Thm}[definition]{\bf Theorem}
\newtheorem{Prop}[definition]{\bf Proposition}
\newtheorem{facts}{\bf Facts}
\newenvironment{proof}{\noindent \textit{Proof. }}{\qed}
\renewcommand{\d}{\delta}
\newcommand{\g}{\gamma}
\newcommand{\G}{\Gamma}
\begin{document}

\begin{frontmatter}

\title{On global offensive alliance in zero-divisor graphs}

\author{Ra\'ul Ju\'arez Morales$^1$}
\address{Escuela Superior de Ciencias y Tecnologías de la Información, Universidad Autónoma de Guerrero, Las Colinas 37A, Col. Fracc. Las playas, Acapulco, Guerrero, M\'exico.} \ead{raul.juarezm@uagro.mx}

\author{Gerardo Reyna$^{2}$}
\address{Facultad de Matemáticas, Universidad Autónoma de Guerrero,
Carlos E. Adame 5, Col. La Garita, Acapulco, Guerrero, M\'exico} \ead{gerardoreynah@hotmail.com}

\author{Jesús Romero Valencia $^4$}
\address{Facultad de Matemáticas, Universidad Autónoma de Guerrero,
Carlos E. Adame 5, Col. La Garita, Acapulco, Guerrero, M\'exico} \ead{jromv@yahoo.com}

\author{Omar Rosario$^3$}
\address{Facultad de Matemáticas, Universidad Autónoma de Guerrero,
Carlos E. Adame 5, Col. La Garita, Acapulco, Guerrero, M\'exico} \ead{omarrosarioc@gmail.com}


\date{\today}

\begin{abstract}
 Let $\G(V,E)$ be a simple graph without loops nor multiple edges. A nonempty subset $S \subseteq V$ is said a {\em global offensive alliance} if  every vertex $v \in V- S$  satisfies that $\d_S(v) \geq \d_{\overline{S}}(v)+1$. The {\em global offensive alliance number} $\g^o(\Gamma)$ is defined as the minimum cardinality among all global offensive alliances. Let $R$ be a finite commutative ring with identity.  In this paper, we initiate the study of the global offensive alliance number of the zero-divisor graph $\G(R)$.
\end{abstract}


\begin{keyword}
Offensive alliances; Zero-divisor graph.
\end{keyword}
\end{frontmatter}

\section{Introduction}
In this paper $\G = (V,E)$ denotes a simple graph of order $n$ and size $m$ where $V$ is the  vertex set and $E$ the edge set. The degree of a vertex $v\in V$ will be denoted by  $\d(v)$,  the minimum degree will be denoted by $\d$, and the maximum degree by $\Delta$. The subgraph induced by a set $S\subset V$ will be denoted by $\langle S \rangle$. For a non-empty subset $S\subset V$, and a vertex $v\in V$ the set of neighbors that $v$ has in $S$ is $N_{S}(v) := \{ u\in S : u\sim v\}$. We will designate $\overline{S} = V-S$.

The mathematical properties of alliances in graphs were first studied by Kristiansen {\em et al}. \cite{KrHeHe}. They proposed alliances of different types, such as defensive alliances \cite{RodYerSig, RodSig, SigRod, YerRod, YerBerRodSig}, offensive alliances \cite{FavFriGod, Rad, BRSY} and dual alliances (also known as powerful alliances) \cite{BriDutHay}. These types of alliances have been extensively studied in the last decade.  A generalization of these alliances called {\em $k$-alliances} were introduced by Shafique and Dutton \cite{ShaDut}. We are interested in the study of the mathematical properties of global offensive alliances. Let $S$ be a nonempty subset of the vertex set $V$, $S$ is a {\em global offensive alliance} of $\G = (V,E)$ if satisfies that 
\begin{equation}\label{aog}
\d_S(v) \geq \d_{\overline{S}}(v)+1 \hspace{1cm} for \hspace{0.1cm} all \hspace{0.1cm} v \in \overline{S}.
\end{equation}
If for a vertex $v \in \overline{S}$ the relation \ref{aog} is verified, we will say that {\em $v$ satisfies the global offensive alliance condition in S}. The {\em global offensive alliance number} $\g^o(\Gamma)$ is defined as the minimum cardinality among all global offensive alliances. For convenience, we will call $\gamma^o -$\emph{alliance} to a global offensive alliance of minimum cardinal.

In this work, $R$ will denote a finite commutative ring with identity. The {\em zero-divisor graph} is the simple graph $\Gamma(R)$, with vertex set the proper zero-divisors of $R$, {\em i.e.}, $Z(R)^{*} = Z(R) - \{0\}$, and for different $u$, $v$ $\in Z(R)^{*}$, the vertices $u$ and $v$ are adjacent if and only if $uv=0$. If $Ann(v)$ denotes the annihilator of $v$ (that is, the set of elements $u\in R$ such that $uv = 0$) notice that $\d(v)=\vert Ann(v) \vert -1$ if $v^2 \neq 0$ or $\d(v)=\vert Ann(v) \vert - 2$ otherwise. Beck was the first to introduce the concept of zero-divisor graph in 1988 for his study of the coloring of a commutative ring \cite{Beck}. Beck considered all the elements of the ring as vertices of the graph and an edge is obtained if two different elements $u$ and $v$ satisfy that $uv=0$. In \cite{AndLiv}, Anderson and Livingston introduced and studied the zero-divisor graph with a slight  modification, the vertices of the graph are the proper zero-divisors. In \cite{MutMam}, Muthana and Mamouni initiated the study of the global defensive alliance number of the zero-divisor graphs. They also considered as vertices of the graph  the proper zero-divisors.

In this paper, we initiate the study of the global offensive alliance number of the zero-divisor graph $\Gamma(R)$ where $R$ is a finite commutative ring with identity, we will use as vertices of the graph the proper zero-divisors. In Section \ref{sec2} we give some results concerning the global offensive alliance number of the zero-divisor graph, for example, we give a characterization in terms of the global offensive alliance number in order to $\Gamma(R)$ be a complete graph, also we give sharp bounds of $\gamma^o(\Gamma(R))$ for different kinds of rings $R$ ((co)local and certain direct products of rings). Finally, in Section \ref{sec3} we give a complete characterization of rings with $\gamma^o(\Gamma(R)) = 1$, or $2$.

\section{The Global Offensive Alliance Number of the Zero-Divisor Graph} \label{sec2}

In this section, our main goal is calculate or provide sharp bounds of the global offensive alliance number of zero-divisor graphs for some kind of direct products of finite local rings with finite fields. In particular, characterize when the global offensive alliance number over the ring formed by the direct product of $\mathbb{Z}_2$ with any ring $R$ is $|Z(R)^\ast| + 1$.

Recall that a commutative ring $R$ is a \emph{local ring} if it contains a unique maximal ideal  $M$. Throughout this paper we will use freely the following well known facts concerning this sort of rings (see \cite{MutMam}).

\begin{facts}\label{fac1}
Let $(R,M)$ be a finite local ring which is not a field. Then, the following holds.
\begin{description}
\item[(1)] $M=Z(R)=Ann_R(x)$ for some $x \in Z(R)^*$.
\item [(2)] $|R|=p^{nr}$, and $|M|=p^{(n-1)r}$ for some prime integer $p$, and some positive integers $n$ and $r$.
\end{description}
\end{facts}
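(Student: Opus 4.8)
The plan is to treat the two parts separately, relying on standard structure theory of finite (hence Artinian) local rings, all of which is elementary once the right dichotomy and filtration are in place.

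For part (1), I would first record the dichotomy valid in any finite commutative ring: every element is either a unit or a zero-divisor. Indeed, if $a \in R$ is not a zero-divisor, then multiplication by $a$ is injective on the finite set $R$, hence bijective, so $a$ is a unit. Consequently the set of non-units coincides with $Z(R)$. In a local ring the non-units are exactly the elements of the unique maximal ideal $M$, so $M = Z(R)$ at once. To exhibit the element $x$, I would use that $M$ is nilpotent: the descending chain $M \supseteq M^2 \supseteq \cdots$ stabilizes by finiteness, say $M^k = M^{k+1}$, and Nakayama's lemma applied to the finitely generated module $M^k$ forces $M^k = 0$. Choosing $k$ minimal with $M^k = 0$ (here $k \geq 2$, since $R$ is not a field and so $M \neq 0$), I pick any nonzero $x \in M^{k-1}$. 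Then $Mx \subseteq M^k = 0$ gives $M \subseteq Ann_R(x)$, while $x \neq 0$ makes $Ann_R(x)$ a proper ideal, hence $Ann_R(x) \subseteq M$ by maximality of $M$. Thus $Ann_R(x) = M$ and $x \in Z(R)^*$, as required.

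For part (2), I would pass to the residue field $R/M$, which is a finite field and therefore has cardinality $p^r$ for some prime $p$ and integer $r \geq 1$. The key device is the $M$-adic filtration $R = M^0 \supseteq M \supseteq M^2 \supseteq \cdots \supseteq M^k = 0$. Each successive quotient $M^i/M^{i+1}$ is annihilated by $M$, hence is naturally a vector space over the field $R/M$, so $|M^i/M^{i+1}| = (p^r)^{d_i}$ with $d_i = \dim_{R/M}(M^i/M^{i+1})$. Multiplying these orders telescopically yields $|R| = (p^r)^{n}$ with $n = \sum_i d_i$, that is $|R| = p^{nr}$; and since $|R| = |M| \cdot |R/M|$, we obtain $|M| = p^{nr}/p^r = p^{(n-1)r}$. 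Positivity of $n$ (indeed $n \geq 2$) follows because $d_0 = \dim_{R/M}(R/M) = 1$ and $M \neq 0$ forces $d_1 = \dim_{R/M}(M/M^2) \geq 1$.

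The only genuine inputs are the two facts imported from commutative algebra, namely the nilpotency of $M$ (resting on Nakayama's lemma) and the cardinality formula for finite fields. I expect the main obstacle to be simply making the annihilator equality airtight: one must verify both inclusions, using minimality of $k$ for $M \subseteq Ann_R(x)$ and maximality of $M$ for the reverse inclusion. Everything else is bookkeeping with the filtration, so I would keep the write-up short and cite \cite{MutMam} for the standard details.
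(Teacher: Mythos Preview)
Your argument is correct and self-contained: the unit/zero-divisor dichotomy gives $M=Z(R)$, nilpotency of $M$ via Nakayama produces the element $x$ with $Ann_R(x)=M$, and the $M$-adic filtration together with the $R/M$-vector-space structure on each layer yields the cardinality formulas.

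Note, however, that the paper does not actually prove this statement at all. It is recorded as ``Facts'' and attributed to \cite{MutMam}; the authors simply invoke it freely thereafter. So your proposal is not so much a different route as a genuine addition: you supply the standard commutative-algebra proof where the paper is content with a citation. What your approach buys is a self-contained exposition (useful if the reader does not have \cite{MutMam} to hand); what the paper's approach buys is brevity, since these are indeed textbook facts about finite local rings. Your closing remark about citing \cite{MutMam} for the details is already in the spirit of what the paper does, and in practice that single citation is all the authors offer.
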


\begin{Thm}
Let $R$ be a ring which is not a field, then $$ \gamma^o(\Gamma(R))+2 \leq \vert Z(R) \vert.$$
Moreover, this bound is sharp. 
\end{Thm}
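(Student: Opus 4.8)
The plan is to reformulate the inequality in terms of the order of the graph. Writing $n = |Z(R)^*|$ for the number of vertices of $\G(R)$, we have $|Z(R)| = n+1$, so the assertion $\gamma^o(\G(R)) + 2 \le |Z(R)|$ is equivalent to $\gamma^o(\G(R)) \le n-1$. Since the complement of the full vertex set $V$ is empty, condition \eqref{aog} holds vacuously, so $V$ is itself a global offensive alliance and $\gamma^o(\G(R)) \le n$ for free; the real content is to save one vertex.

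To do that I would exhibit a global offensive alliance of cardinality $n-1$. I choose any vertex $v$ of positive degree and set $S = V - \{v\}$. Then $\overline{S} = \{v\}$, so the only vertex at which \eqref{aog} must be verified is $v$ itself. As $\G(R)$ has no loops, $\d_{\overline{S}}(v) = 0$, while $\d_S(v) = \d(v)$; hence the requirement $\d_S(v) \ge \d_{\overline{S}}(v) + 1$ reduces to $\d(v) \ge 1$, which holds by the choice of $v$. Thus $S$ is a global offensive alliance with $|S| = n-1$, giving $\gamma^o(\G(R)) \le n-1$. The one point to secure is the existence of a vertex of positive degree: since $R$ is not a field, $Z(R)^* \neq \emptyset$, and the zero-divisor graph is connected by Anderson--Livingston, so whenever $n \ge 2$ connectedness forbids isolated vertices and every vertex is an admissible choice of $v$.

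I expect the only delicate point to be the degenerate rings with $n = 1$, namely $\mathbb{Z}_4$ and $\mathbb{Z}_2[x]/(x^2)$, whose graph is a single isolated vertex and for which the bound cannot hold; these must be excluded at the outset. Beyond that bookkeeping the argument is elementary, the whole substance being that connectedness guarantees the removed vertex is non-isolated.

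For sharpness I would take $R = \mathbb{Z}_2 \times \mathbb{Z}_2$. Its proper zero-divisors are $(1,0)$ and $(0,1)$, which are adjacent because $(1,0)(0,1) = 0$, so $\G(R) = K_2$ and $|Z(R)| = 3$. The singleton $S = \{(1,0)\}$ is a global offensive alliance, since the unique vertex of $\overline{S}$ has one neighbor inside $S$ and none outside; hence $\gamma^o(\G(R)) = 1$ and $\gamma^o(\G(R)) + 2 = 3 = |Z(R)|$, showing the bound is attained.
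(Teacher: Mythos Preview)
Your argument is correct and takes a genuinely different, far more elementary route than the paper. The paper fixes a $\gamma^o$-alliance $S$ of size $k$, argues that $S\subseteq N(\overline S)$ by minimality, and then runs a chain of annihilator and degree estimates to obtain $k\le 2k(|Z(R)^*|-k)$, whence $|Z(R)|\ge k+2$. You instead just exhibit an explicit global offensive alliance of size $n-1$ by deleting a single non-isolated vertex, which immediately gives $\gamma^o(\Gamma(R))\le n-1$. Your approach is shorter and conceptually transparent; the paper's computation does not yield anything sharper, so nothing is lost by your simplification. For sharpness the paper uses $\mathbb{Z}_9$ while you use $\mathbb{Z}_2\times\mathbb{Z}_2$; both have $\Gamma(R)\cong K_2$, so the examples are interchangeable.

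Your caveat about the degenerate case $n=|Z(R)^*|=1$ (that is, $R\cong\mathbb{Z}_4$ or $R\cong\mathbb{Z}_2[X]/(X^2)$) is well taken and in fact points to a genuine gap in the theorem as stated: for these rings $\gamma^o(\Gamma(R))=1$ while $|Z(R)|=2$, so the inequality fails. The paper's own proof tacitly assumes $\overline S\neq\emptyset$ when it writes $S\subseteq N(\overline S)$, so it has the same hidden exclusion. Keeping your explicit remark that these two rings must be excluded is the right call.
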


\begin{proof}
Let $k=\gamma^o(\Gamma(R))$ and $S\subseteq V(\Gamma(R))$ be a global offensive alliance with $\vert S\vert = k$. Since $S$ is a minimum global offensive alliance $S\subseteq N(\overline{S})$ therefore 

$$
\begin{array}{ccl}
k & \leq & \vert N(\bar{S}) \vert =  \left\vert \displaystyle\bigcup_{v\in\bar{S}}{N(v)} \right\vert \leq \left\vert \displaystyle\bigcup_{v\in\bar{S}}{Ann(v)} \right\vert \\ 
 & = & \left\vert \left[S \bigcap \left(\displaystyle\bigcup_{v\in\bar{S}}{Ann(v)}\right)\right] \cup \left[\bar{S} \bigcap \left(\displaystyle\bigcup_{v\in\bar{S}}{Ann(v)}\right)\right] \right\vert \\ 
 & = & \left\vert S \bigcap \left(\displaystyle\bigcup_{v\in\bar{S}}{Ann(v)}\right)\right\vert + \left\vert \bar{S} \bigcap \left(\displaystyle\bigcup_{v\in\bar{S}}{Ann(v)}\right) \right\vert \\ 
 & = & \left\vert  \displaystyle\bigcup_{v\in\bar{S}}{\left( S \cap Ann(v) \right)}\right\vert + \left\vert \displaystyle\bigcup_{v\in\bar{S}}{\left( \bar{S} \cap Ann(v) \right)} \right\vert \\ 
 & \leq & \displaystyle\sum_{v\in\bar{S}}{\left\vert S \cap Ann(v) \right\vert} + \displaystyle\sum_{v\in\bar{S}}{\left\vert \bar{S} \cap Ann(v) \right\vert} \\ 
 & \leq & \displaystyle\sum_{v\in\bar{S}}{d_S{(v)}} + \displaystyle\sum_{v\in\bar{S}}{(d_{\bar{S}}{(v)} + 1)}  \\ 
 & \leq & \displaystyle 2\sum_{v\in\bar{S}}{k} \\ 
 & = & 2k(\left\vert Z(R)^* \right\vert -k).
\end{array} 
$$

Hence $k \leq 2k(\left\vert Z(R)^* \right\vert -k).$ In other words $\left\vert Z(R)^* \right\vert \geq k + \frac{1}{2}$ therefore $\left\vert Z(R) \right\vert \geq k+2.$

In order to see this bound is sharp, take $R= \mathbb{Z}_9$. A straightforward computation shows that $ \vert Z(R) \vert = 3$ and $\g^o(\G(R))=1$.
\end{proof}

In \cite{MutMam} is shown that if $R$ is a finite commutative ring which is not a field then $\vert Z(R) \vert \leq \g_a(\G(R))^2 + \g_a(\G(R)) +1$ (where $\g_a(\G(R))$ denotes the global defensive alliance number) and equality holds also with $R=\mathbb{Z}_9$. Hence next result follows at once.

\begin{cor}
If $R$ is a ring which is not a field, then $$\g^o(\G(R))+ 2 \leq \vert Z(R) \vert  \leq \g_a(\G(R))^2 + \g_a(\G(R)) +1$$ and the two bounds are sharp.
\end{cor}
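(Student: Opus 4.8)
The plan is to obtain the two-sided chain by transitivity, since each half has already been established separately. The left inequality $\g^o(\G(R)) + 2 \leq \vert Z(R)\vert$ is exactly the statement of the preceding Theorem, and the right inequality $\vert Z(R)\vert \leq \g_a(\G(R))^2 + \g_a(\G(R)) + 1$ is the bound of Muthana and Mamouni recalled from \cite{MutMam}; both hold for any ring $R$ that is not a field. Concatenating the two gives the displayed inequality at once, so the analytic content of the corollary is entirely inherited and nothing new must be proved for the bound itself.

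It then remains to exhibit sharpness, and here the key observation is that a \emph{single} ring witnesses both extremes simultaneously. Taking $R = \mathbb{Z}_9$, the computation in the proof of the Theorem already gives $\vert Z(R)\vert = 3$ and $\g^o(\G(R)) = 1$, so $\g^o(\G(R)) + 2 = 3 = \vert Z(R)\vert$ and the left bound is attained. For the right bound, the remark following that proof records that equality in $\vert Z(R)\vert \leq \g_a(\G(R))^2 + \g_a(\G(R)) + 1$ also occurs at $R = \mathbb{Z}_9$; indeed $\g_a(\G(\mathbb{Z}_9)) = 1$ yields $1 + 1 + 1 = 3 = \vert Z(R)\vert$, so the right bound is attained as well.

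Because every step is either a citation of an already-proved inequality or a single explicit evaluation, there is no genuine obstacle here. The only point worth emphasising, and the one thing to state carefully in the write-up, is that the \emph{same} example $\mathbb{Z}_9$ makes both inequalities tight at once; this is what lets the two sharpness assertions be settled together rather than by a separate search for each side.
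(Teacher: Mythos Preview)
Your proposal is correct and matches the paper's own argument essentially verbatim: both obtain the chain by concatenating the preceding theorem with the Muthana--Mamouni bound, and both use $R=\mathbb{Z}_9$ as the single witness to sharpness on each side.
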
 

\begin{Thm}
If $R$ is a local ring which is not a field, then $$\g^o(\G(R)) \leq \left(\vert Z(R)^* | - \g^o(\G(R))\right)( \g^o(\G(R)) -1) + 1.$$
\end{Thm}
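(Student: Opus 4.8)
The plan is to reduce the stated inequality to a bound already available in the excerpt, after a short algebraic manipulation. Write $k = \gamma^o(\Gamma(R))$ and $n = |Z(R)^*|$, so the claim reads $k \leq (n-k)(k-1) + 1$. Expanding the right-hand side, this is equivalent to $k^2 - 1 \leq n(k-1)$, that is, to $(k-1)(k+1) \leq (k-1)n$. If $k = 1$ both sides vanish and there is nothing to prove, so I may assume $k \geq 2$; dividing by the positive integer $k-1$, the inequality becomes simply $n \geq k+1$, i.e. $|Z(R)^*| - \gamma^o(\Gamma(R)) \geq 1$. Thus the whole statement amounts to the assertion that a minimum global offensive alliance is a proper subset of $V(\Gamma(R))$, and the quadratic appearance of the bound is illusory.

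Next I would establish $n \geq k+1$. The quickest route is to invoke the first Theorem: a local ring which is not a field is in particular a ring which is not a field, so $\gamma^o(\Gamma(R)) + 2 \leq |Z(R)| = |Z(R)^*| + 1$, whence $n = |Z(R)^*| \geq \gamma^o(\Gamma(R)) + 1 = k+1$, as needed. Alternatively, and using the local hypothesis directly, Facts \ref{fac1} furnish an element $x \in Z(R)^*$ with $Ann_R(x) = M = Z(R)$; then $x$ is adjacent to every other vertex of $\Gamma(R)$, so taking $S_0 = V(\Gamma(R)) \setminus \{x\}$ one only needs to verify the alliance condition at $x$, namely $\d_{S_0}(x) = n-1 \geq 1 = \d_{\overline{S_0}}(x) + 1$, which holds whenever $n \geq 2$. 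Hence $S_0$ is a global offensive alliance of cardinality $n-1$, giving $\gamma^o(\Gamma(R)) \leq n-1$ and again $n - k \geq 1$ (the case $n = 1$ being immediate).

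Finally I would assemble the pieces: from $n - k \geq 1$ together with $k - 1 \geq 0$ (every alliance is nonempty, so $k \geq 1$) it follows that $(n-k)(k-1) \geq 1\cdot(k-1) = k-1$, and adding $1$ yields $(n-k)(k-1) + 1 \geq k$, which is exactly the assertion. There is no serious obstacle here: the only substantive step is recognizing that, for $k \geq 2$, the bound is equivalent to the linear inequality $|Z(R)^*| \geq \gamma^o(\Gamma(R)) + 1$, after which either the first Theorem or the explicit alliance $V(\Gamma(R)) \setminus \{x\}$ closes the argument. I would favor the self-contained version built on the universal vertex $x$, as it stays within the local setting and also makes transparent when equality holds, for instance at $R = \mathbb{Z}_9$, where $n = 2$, $k = 1$, and more tellingly at the local ring with $\Gamma(R) = K_3$, where $n = 3$, $k = 2$ and the bound is attained.
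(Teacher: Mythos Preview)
Your proof is correct, and it takes a genuinely different route from the paper's. The paper argues directly on a $\gamma^o$-alliance $S$: it splits into two cases according to whether some universal vertex $x$ (an element with $Ann(x)=Z(R)$, guaranteed by the local hypothesis) lies in $S$ or in $\overline{S}$. In the main case $x\in S$, it uses that $S-\{x\}\subseteq N(\overline{S})$ together with $\delta_{S-\{x\}}(v)=\delta_S(v)-1\le k-1$ for each $v\in\overline{S}$ to obtain $|S|\le 1+\sum_{v\in\overline S}(k-1)=(|Z(R)^*|-k)(k-1)+1$; the case $x\in\overline{S}$ is then disposed of separately.

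Your argument instead observes that the quadratic-looking bound factors as $(k-1)(n-k-1)\ge 0$, hence for $k\ge 2$ is equivalent to the linear inequality $|Z(R)^*|\ge \gamma^o(\Gamma(R))+1$, which is exactly the content of the first Theorem (or follows from the explicit alliance $V\setminus\{x\}$ built on the universal vertex). This is shorter and more transparent: it shows that for local rings the present theorem carries no information beyond the earlier bound $\gamma^o(\Gamma(R))+2\le |Z(R)|$. The paper's counting approach, by contrast, is self-contained and illustrates the general technique of bounding $|S|$ through $N(\overline{S})$, but at the cost of obscuring this equivalence.
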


\begin{proof}
Let $k= \g^o(\G(R))$, if any $\gamma^o-$alliance $S$, contains some element $ x \in Z(R)^*$ with $Z(R) = Ann(x)$, then 

$$
\begin{array}{ccl} 
\vert S \vert & = & 1 + \vert S - \{ x \} \vert = 1 + \left\vert (S - \{ x \}) \cap N(\overline{S}) \right\vert \\ 
 & = & 1 + \left\vert(S-\{x\})\cap \left (\displaystyle \bigcup_{v\in \overline{S}}N(v)\right) \right\vert \\ 
 & = & 1 + \left\vert \displaystyle\bigcup_{v\in \overline{S}} ((S-\{x\})\cap N(v))\right\vert \\ 
 & \leq & 1 + \displaystyle\sum_{v\in\bar{S}}({\vert S - \lbrace x \rbrace )\cap N(v) \vert}= 1+ \displaystyle\sum_{v\in\bar{S}}{\d_{S - \lbrace x \rbrace}(v)} \\ 
 & = & 1+ \displaystyle\sum_{v\in\bar{S}}{(\d_{S}(v)- 1)} \\ 
 & \leq & 1 + \displaystyle\sum_{v\in\bar{S}}{(k- 1)} \\ 
 & = & (|Z(R)^*|-k)(k-1) + 1.
\end{array} 
$$
In other case, if for each $x$ with $Ann(x) =Z(R)$ occurs that $x\in \overline{S}$, then $|\overline{S}| = |\overline{S} - \{ x\}|+1 = |Z(R)^*| - (k+1) \leq |Z(R)^*| - k +1\leq  (|Z(R)^*| - k )(k-1) + 1$.
\end{proof}

Notice that the inequality of preceding theorem  becomes an equality for some rings, for example  $R = \mathbb{Z}_9$.

In \cite[Remark 2.1]{BRSY} is shown that  $\g^o(K_{m,n}) = \min \{m,n\}$. If $F$ and $K$ are fields then $\G(F\times K) = K_{|F| -1, |K|-1}$ (see \cite[Example 3.4]{AndLiv}). Hence next lemma follows at once. 

\begin{Lem}\label{lema23}
If $F$ and $K$ are fields, then $$\g^o(\G(F\times K)) = \min\{|F| -1, |K|-1 \}. $$
\end{Lem}

\begin{Lem}
If $R$ is a ring, then
\begin{enumerate} \label{lema24}
\item [i)] The inequality $\g^o(\Gamma(\mathbb{Z}_2 \times R)) \leq |Z(R)|$ holds.
\item [ii)] If $R$ has at least two units, then $(1,0)$ is an element of every $\gamma^o-$alliance of $\Gamma(\mathbb{Z}_2\times R)$.
\end{enumerate}
\end{Lem}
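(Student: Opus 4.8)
The plan is to first pin down the structure of $\G(\mathbb{Z}_2\times R)$. Writing $U(R)$ for the set of units of $R$, the proper zero-divisors of $\mathbb{Z}_2\times R$ split into $A=\{(0,b):b\in R\setminus\{0\}\}$ and $B=\{(1,b):b\in Z(R)\}$, so that $|B|=|Z(R)|$. Since $(a_1,b_1)(a_2,b_2)=(a_1a_2,b_1b_2)$, two vertices are adjacent exactly when $a_1a_2=0$ in $\mathbb{Z}_2$ and $b_1b_2=0$ in $R$; in particular $B$ is an independent set, the vertex $(1,0)$ is adjacent to every vertex of $A$, and for a unit $u$ the vertex $(0,u)$ has $(1,0)$ as its unique neighbour (because $(0,u)(a,b)=(0,ub)=0$ forces $b=0$). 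These last two observations are the engine of the whole argument.

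For part (i) I would exhibit $S=B$ as a global offensive alliance; as $|B|=|Z(R)|$ this yields the bound. Here $\overline{S}=A$. A vertex $(0,u)$ with $u\in U(R)$ has its single neighbour $(1,0)$ inside $S$, so $\d_S=1\ge 0+1=\d_{\overline S}+1$. A vertex $(0,z)$ with $z\in Z(R)^*$ has exactly $|Ann_R(z)|$ neighbours in $B$ (the vertices $(1,b)$ with $b\in Ann_R(z)\subseteq Z(R)$), while its neighbours in $A$ are the $(0,b)$ with $b\in Ann_R(z)$, $b\neq 0$, $b\neq z$, of which there are at most $|Ann_R(z)|-1$. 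Hence $\d_S((0,z))\ge \d_{\overline S}((0,z))+1$, and $S$ is a global offensive alliance.

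For part (ii) I would argue by contradiction: suppose a $\g^o$-alliance $S$ omits $(1,0)$, so $(1,0)\in\overline S$. First, every unit vertex must lie in $S$: if $(0,u)\in\overline S$ then its only neighbour $(1,0)$ is also in $\overline S$, giving $\d_S((0,u))=0$ and $\d_{\overline S}((0,u))=1$, which violates the alliance condition. Thus $A_U:=\{(0,u):u\in U(R)\}\subseteq S$. Now set $S'=(S\setminus A_U)\cup\{(1,0)\}$. Because every vertex of $A_U$ is a leaf attached to $(1,0)$, deleting $A_U$ leaves $\d_S(v)$ unchanged for all $v\neq(1,0)$, so the alliance condition at each vertex of $\overline S\setminus\{(1,0)\}$ can only be helped by inserting $(1,0)$, while each relocated vertex $(0,u)$ now has its unique neighbour $(1,0)$ inside $S'$ and hence satisfies the condition. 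Therefore $S'$ is a global offensive alliance. Since $R$ has at least two units we have $|A_U|\ge 2$, so $|S'|=|S|-|A_U|+1<|S|$, contradicting the minimality of $S$.

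The main obstacle is the verification that $S'$ is again a global offensive alliance, which in principle requires tracking, for every vertex, how its split of neighbours between $S$ and $\overline S$ changes under the swap. The degree-one structure of the unit vertices is precisely what collapses this: since $A_U$ consists of leaves all hanging off $(1,0)$, removing them is invisible to every other vertex, and the only new constraints come from the relocated leaves, which are immediate. The hypothesis of two units enters only to force the strict inequality $|S'|<|S|$.
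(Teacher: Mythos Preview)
Your proof is correct and follows the same overall strategy as the paper. The only difference is cosmetic: for part (i) you exhibit the independent set $B=\{1\}\times Z(R)$ as a global offensive alliance, whereas the paper uses $\{(1,0)\}\cup(\{0\}\times Z(R)^*)$; both sets have cardinality $|Z(R)|$ and both verifications go through. For part (ii) your argument is exactly the paper's swap $(S\setminus(\{0\}\times U(R)))\cup\{(1,0)\}$, with the added benefit that you spell out why the swap preserves the alliance condition (the unit vertices are pendant at $(1,0)$, so their removal is invisible elsewhere), a point the paper leaves implicit.
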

\begin{proof}
\begin{enumerate}
\item [i)] The set $\{(1,0)\} \cup (\{0\} \times Z(R)^*)$ is a global offensive alliance.
\item [ii)] Let $S$ be a global offensive alliance that does not contain the element $(1,0)$. This implies that $\{0\}\times U(R)\subseteq S$. In this way $(S -(\{0\}\times U(R)))\cup \{(1,0)\}$ is a global offensive alliance with $|S| - |U(R)| + 1\leq |S| - 1$ vertices.
\end{enumerate}
\end{proof}

\begin{Thm}
If $R$ is a ring then $\Gamma(R)=K_n$ if and only if $\gamma^\circ(\Gamma(\mathbb{Z}_2\times R))=n+1$ and exactly one of the following conditions holds:  
\begin{enumerate}
\item [i)] $n\leq 4$ and $R \ncong $ $\mathbb{Z}_3\times \mathbb{Z}_3$, or 
\item[ii)] $ \d_{\Gamma(R)}\geq 4$.
\end{enumerate}
\end{Thm}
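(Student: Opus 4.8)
The plan is to first describe $G:=\G(\mathbb{Z}_2\times R)$ explicitly and then prove the two implications. Put $N=|Z(R)^*|$. The proper zero-divisors of $\mathbb{Z}_2\times R$ split into four families: the vertex $A=(1,0)$; the pendants $B=\{0\}\times U(R)$, each adjacent only to $A$; the set $C=\{0\}\times Z(R)^*$; and $D=\{1\}\times Z(R)^*$. A direct check gives: $A$ is joined to every vertex of $B\cup C$; $\langle C\rangle\cong\G(R)$; and $(1,z)\in D$ is joined precisely to those $(0,z')\in C$ with $zz'=0$, so that $(0,z)$ and $(1,z)$ are adjacent iff $z^2=0$. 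Because $\G(R)=K_n$ forces $n=N$, and because Lemma~\ref{lema24} exhibits the alliance $\{(1,0)\}\cup C$ of size $N+1=|Z(R)|$, the theorem amounts to saying that this maximal value $|Z(R)|$ is attained exactly when $\G(R)$ is complete, subject to the stated side conditions.

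For the forward implication assume $\G(R)=K_n$. The side condition is then automatic: $\d_{\G(R)}=n-1$, and $R\ncong\mathbb{Z}_3\times\mathbb{Z}_3$ since $\G(\mathbb{Z}_3\times\mathbb{Z}_3)=K_{2,2}$ is not complete; hence (i) reduces to $n\le4$ and (ii) to $n\ge5$, and exactly one of them holds. The upper bound $\gamma^o(G)\le N+1$ is Lemma~\ref{lema24}, so it remains to prove $\gamma^o(G)\ge n+1$. Here I would use the classification of complete zero-divisor graphs (see \cite{AndLiv}): either $R\cong\mathbb{Z}_2\times\mathbb{Z}_2$, a six-vertex graph handled by inspection, or $R$ is local with $Z(R)^2=0$. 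In the local case every $z^2=0$, so each $(1,z)\in D$ is joined to all of $C$ and each $(0,z)\in C$ has degree $2n$ with neighbourhood contained in $\{A\}\cup C\cup D$. Consequently, if an alliance $S$ leaves some $C$-vertex outside, that vertex needs more than $n$ of its $2n$ neighbours inside $S$, forcing $|S|\ge n+1$; and if instead $C\subseteq S$, then either $|S|\ge n+1$ or $S=C$, the latter failing because the pendant $(0,1)$ has its only neighbour $A\notin S$. Thus $\gamma^o(G)=n+1$.

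For the converse I would argue contrapositively: assuming that exactly one of (i),(ii) holds and that $\G(R)$ is \emph{not} complete, I will construct an alliance of size $\le N$, giving $\gamma^o(G)\le N<N+1$. The elementary move removes a single vertex $(0,z_1)$ from $\{A\}\cup C$; a degree count shows the result is still a global offensive alliance exactly when $z_1^2\neq0$ and every $\G(R)$-neighbour $z_j$ of $z_1$ has $\deg_G(1,z_j)\ge3$. Under hypothesis (ii) we have $\d_{\G(R)}\ge4$, so the degree requirement is free; then if some $z_1^2\neq0$ one deletion already gives an alliance of size $N$, and if instead every $z^2=0$ I would pick a shortest path $a\sim b\sim c$ with $a\not\sim c$ (which exists because $\G(R)$ is connected but not complete), delete $(0,a)$ and $(0,c)$, and insert the common neighbour $(1,b)$: this leaves $\{A\}\cup C$ with size $N+1-2+1=N$, and $(1,b)$ restores the balance at both $(0,a)$ and $(0,c)$, the remaining checks again using $\d_{\G(R)}\ge4$.

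Under hypothesis (i) we have $N\le4$, so only finitely many rings occur and the reduction must be verified directly; here the degree bounds behind the constructions above can fail, and $\mathbb{Z}_3\times\mathbb{Z}_3$ is precisely the unique non-complete ring in this range for which every reduction fails. Indeed for $R=\mathbb{Z}_3\times\mathbb{Z}_3$ one has $\langle C\rangle=K_{2,2}$ with all $D$-vertices of degree $2$ and all $z^2\neq0$, so the single-deletion requirement $\deg_G(1,z_j)\ge3$ is violated, no swap works, and one computes $\gamma^o(G)=N+1$ although $\G(R)$ is not complete; this is exactly the false positive that condition (i) excludes. The main obstacle is therefore the backward direction: organising the deletion and swap constructions so that they cover every non-complete ring with $\d_{\G(R)}\ge4$ or $N\le4$, and isolating $\mathbb{Z}_3\times\mathbb{Z}_3$ as the single genuine exception.
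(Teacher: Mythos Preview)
Your proposal is correct and follows essentially the same route as the paper: the same decomposition of $\G(\mathbb{Z}_2\times R)$ into $\{(1,0)\}$, $\{0\}\times U(R)$, $\{0\}\times Z(R)^*$, $\{1\}\times Z(R)^*$; the same use of the Anderson--Livingston classification for the forward direction; inspection for $n\le 4$; and, for the backward direction under $\d_{\G(R)}\ge 4$, the same ``remove two $C$-vertices and insert one $D$-vertex'' swap to produce an alliance of size $n$. The only cosmetic differences are that you argue the converse contrapositively and, in the swap, use a common neighbour $b$ on an induced path $a\sim b\sim c$, whereas the paper first deduces $x^2=0$ for all $x$ and then swaps in $(1,z)$ with $Ann(z)=Z(R)$; both choices work for the same degree reasons.
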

\begin{proof}
$\Rightarrow$) In \cite{AndLiv} can be found all zero-divisor graphs with at most four vertices, in any of those an easy computation shows the result and this establishes i).\\
For ii), if $n\geq 5$ by \cite[Theorem 2.8]{AndLiv} $R$ is a local ring such that for each $x\in Z(R)^*$, $Ann(x)=Z(R)$. \\
Let $S$ be a $\gamma^o-$alliance, if $x\in Z(R)^*$ is such that $(0,x)\notin S$, since the degree of this vertex in $\Gamma(\mathbb{Z}_2\times R)$ is  $2|Z(R)^*|$, it must occurs that $|S|\geq |Z(R)|=n+1$. The other inequality is just Lemma \ref{lema24}-i).\\
Suppose now that for all element $x\in Z(R)^*$, $(0,x)\in S$. Since $n\geq 5$,  Facts \ref{fac1} gives that  $R$ contains at least two units. According Lemma \ref{lema24} ii) it follows that  $(1,0) \in S$, hence $\{(1,0)\}\cup (\{0\}\times Z(R)^*)\subseteq S$ and again Lemma \ref{lema24}-i) gives $\gamma^o(\Gamma(\mathbb{Z}_2\times R))=n+1$.\\
$\Leftarrow$) i) If the zero-divisor graph $\Gamma(R)$ has at most four vertices and $R \ncong \mathbb{Z}_3 \times \mathbb{Z}_3$, once again an easy examination  at the  whole these zero-divisor graphs shows that $\Gamma(R)$ must be complete.\\
ii) The set $S=\{(0,1)\}\cup (\{0\}\times Z(R)^\ast)$ is a global offensive alliance which by hypothesis is in fact a $\gamma^o-$alliance. For each $x\in Z(R)^\ast$, since $S-\{(0,x)\}$ is not a global offensive  alliance and $\delta_{\Gamma(R)} \geq 4$ it follows that $x^2=0$.\\
The above has several implications: the {\em nilradical} of $R$ (i.e., the ideal of all those nilpotent elements $x \in R$) equals to $Z(R)$ and therefore $R$ is a local ring, each vertex of the form  $(0,x)$ (and of the form $(1,x)$) with $x\in Z(R)^\ast$ has degree at least $5$ and the set $X^\ast=\{x\in Z(R)^\ast| Ann(x)=Z(R)\}$ is nonempty.\\
The theorem is proved if we can see that $\overline{X^\ast}$  is empty. If this is not the case, there are different $x,y\in \overline{X^\ast}$ such that $xy\neq 0$. If $z\in X^\ast$, it can be verified that $(S-\{(0,x),(0,y)\})\cup\{(1,z)\}$ is a global offensive alliance with $n$ vertices, which is a contradiction.
\end{proof}

Recall that a ring $R$ is \emph{co-local} if it contains a non-trivial ideal contained in each non-trivial proper ideal.

\begin{Thm}\label{Rcolocal}
If a finite ring $R$ is co-local then is local.
\end{Thm}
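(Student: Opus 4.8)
The plan is to show that the distinguished minimal ideal provided by co-locality is incompatible with any nontrivial direct product decomposition of $R$, and then to use the fact that a finite commutative ring admitting no such decomposition is local.

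First I would record what the hypothesis actually gives. Since a field has no non-trivial proper ideals, the very existence of the required ideal forces $R$ not to be a field, and it furnishes a fixed non-zero proper ideal $I$ with $I \subseteq J$ for \emph{every} non-trivial proper ideal $J$; in particular $I$ is the unique minimal non-zero ideal of $R$. This single ideal $I$ is the only feature of co-locality I will use.

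Next I would rule out nontrivial idempotents. Suppose $e \in R$ is idempotent with $e \neq 0$ and $e \neq 1$. Then $eR$ and $(1-e)R$ are both non-zero (they contain $e$ and $1-e$ respectively) and both proper, because an idempotent generating all of $R$ satisfies $1 = er$, whence $e = e^2 r = er = 1$; so they are non-trivial proper ideals. A direct check gives $eR \cap (1-e)R = \{0\}$. Applying co-locality twice, $I \subseteq eR$ and $I \subseteq (1-e)R$, so $I \subseteq eR \cap (1-e)R = \{0\}$, contradicting $I \neq 0$. Hence $R$ has no nontrivial idempotent, and equivalently no decomposition $R \cong A \times B$ into two non-zero rings (such a decomposition would supply the idempotent $(1,0)$).

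Finally I would invoke the structure of finite commutative rings: being finite, $R$ is Artinian, and every commutative Artinian ring is a finite direct product $R \cong R_1 \times \cdots \times R_k$ of local rings. By the previous paragraph this product must be trivial, i.e.\ $k = 1$, so $R = R_1$ is local. The only substantive ingredient here is the local decomposition of a finite commutative ring; the rest is a short deduction from the definition of $I$. If one instead wants a self-contained argument, the main obstacle is to manufacture, from the assumption of two distinct maximal ideals $M_1 \neq M_2$, two non-zero ideals meeting in $0$ — note that $M_1 \cap M_2 \supseteq I \neq 0$, so the maximal ideals themselves do not suffice. This is handled by observing that distinct maximal ideals are comaximal, that the Jacobson radical of the Artinian ring $R$ is nilpotent so that $\bigcap_i M_i^{\,t} = \{0\}$ for large $t$, and then applying the Chinese Remainder Theorem to get $R \cong \prod_i R/M_i^{\,t}$, a nontrivial product whenever there is more than one maximal ideal, again contradicting the absence of nontrivial idempotents.
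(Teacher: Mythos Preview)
Your argument is correct, but it follows a different route from the paper's. The paper argues directly with annihilators: picking $x\neq 0$ in the minimal ideal $N$, for every $y\in Z(R)^*$ the ideal $Ann(y)$ is non-trivial and proper, so $N\subseteq Ann(y)$, i.e.\ $xy=0$; this shows $Z(R)\subseteq Ann(x)$, and since trivially $Ann(x)\subseteq Z(R)$ one gets $Z(R)=Ann(x)$. In a finite commutative ring the non-units coincide with the zero-divisors, so $Z(R)$ being an ideal makes it the unique maximal ideal.

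By contrast, you show that the unique minimal ideal $I$ forbids nontrivial idempotents (because $I\subseteq eR\cap(1-e)R=\{0\}$), and then invoke the Artinian decomposition of a finite commutative ring into a product of local rings to force a single factor. This is clean and conceptual, but it imports the structure theorem for finite (Artinian) commutative rings as a black box; the paper's proof is more elementary and stays within the zero-divisor/annihilator language that the rest of the article uses. One small remark: your opening sentence that co-locality ``forces $R$ not to be a field'' reads the definition literally (a field has no non-trivial ideal to serve as $N$); the paper instead treats the field case separately as trivially local. Either reading is harmless since fields are local anyway.
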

\begin{proof}
If $R$ is a field the result is evident. If $R$ is not a field, let $N$ be the non-trivial ideal contained in each non-trivial proper ideal of $R$. Let  $x \in N$ be with $x \neq 0$, then $x$ is contained in every non-trivial ideal of $R$. In particular for each $y \in Z(R)^*$, $x \in Ann(y)$. Conversely, every $y \in Z(R)^*$ is such that $y \in Ann(x)$ in other words, $Ann(x)=Z(R)$ hence $R$ is local.
\end{proof}

\begin{Thm}
Let $R$ be a co-local ring  which is not a field and $N$ the non-trivial ideal contained in all proper non-trivial ideals of $R$. Then $\g^o(\G(R)) \geq \left \lceil \frac{\vert N^* \vert}{2} \right \rceil$. Furthermore this bound is sharp.
\end{Thm}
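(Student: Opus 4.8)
The plan is to reduce to the local case and then exploit the extreme adjacency structure forced by co-locality. By Theorem \ref{Rcolocal}, $R$ is local, and the argument proving that theorem in fact gives more: any nonzero $x \in N$ lies in every non-trivial proper ideal of $R$, so in particular $x \in Ann(y)$ for each $y \in Z(R)^*$, whence $xy = 0$. Since $R$ is not a field, $Z(R)^* \neq \emptyset$, so such an $x$ is itself a zero-divisor and $N^* := N - \{0\} \subseteq Z(R)^*$. Combining these, each vertex $v \in N^*$ satisfies $N(v) = Z(R)^* - \{v\}$; that is, the elements of $N^*$ are \emph{universal} vertices of $\G(R)$, each adjacent to all the remaining $|Z(R)^*| - 1$ vertices. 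I regard this observation, not any intricate computation, as the crux of the proof, and pinning it down carefully (both $N^* \subseteq Z(R)^*$ and the universality) is where the real work lies.

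Next I would fix an arbitrary global offensive alliance $S$ of $\G(R)$ and split into two cases according to whether some universal vertex avoids $S$. If $N^* \subseteq S$, then $|S| \geq |N^*| \geq \lceil |N^*|/2 \rceil$ and there is nothing further to check.

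Otherwise there is a vertex $v \in N^* \cap \overline{S}$. Because $v$ is universal and $v \notin S$, every element of $S$ is a neighbor of $v$, so $\d_S(v) = |S|$, while $\d_{\overline{S}}(v) = |\overline{S}| - 1$ since $v$ is adjacent to every member of $\overline{S}$ except itself. Substituting into the defining inequality $\d_S(v) \geq \d_{\overline{S}}(v) + 1$ yields $|S| \geq |\overline{S}|$, hence $2|S| \geq |Z(R)^*|$. As $N^* \subseteq Z(R)^*$ gives $|Z(R)^*| \geq |N^*|$, we obtain $2|S| \geq |N^*|$, and since $|S|$ is an integer this forces $|S| \geq \lceil |N^*|/2 \rceil$. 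Because $S$ was arbitrary, applying this to a minimum global offensive alliance gives $\g^o(\G(R)) \geq \lceil |N^*|/2 \rceil$.

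For sharpness I would exhibit a co-local ring attaining equality; $R = \mathbb{Z}_{p^2}$ serves, since then $N = Z(R)$, the graph $\G(R)$ is the complete graph $K_{p-1}$ on $N^* = Z(R)^*$, and a direct application of the alliance condition gives $\g^o(K_{p-1}) = \lceil (p-1)/2 \rceil = \lceil |N^*|/2 \rceil$ (the case $p = 3$ recovering $\mathbb{Z}_9$). The only genuine obstacle is the universality claim; once it is established, the alliance condition collapses to the elementary counting above, which is essentially the computation of $\g^o$ for a graph possessing a universal vertex.
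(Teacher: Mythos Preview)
Your proof is correct and follows essentially the same approach as the paper: both arguments hinge on the observation that every element of $N^*$ is a universal vertex of $\Gamma(R)$, then split according to whether $N^* \subseteq S$ or some $v \in N^*$ lies in $\overline{S}$. Your version is in fact a bit more streamlined---the paper additionally invokes the prime-power cardinalities of $Z(R)$ and $N$ to separate the cases $N \subsetneq Z(R)$ and $N = Z(R)$, whereas your counting $|S| \geq |\overline{S}|$ via the alliance condition at a universal vertex handles both uniformly and makes that detour unnecessary.
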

\begin{proof}
By Theorem \ref{Rcolocal}, $R$ is a local ring with unique maximal ideal $Z(R)\neq \{0\}$. Moreover $N \subseteq Z(R)$ and $\vert Z(R) \vert = p^n$ for some prime number $p$ and for some positive integer number $n$ and therefore $\vert N \vert = p^m$, with $m\leq n$.
If $N \varsubsetneq Z(R)$, then $\vert N \vert = p^m \leq p^{n-1} = \frac{p^n}{p} \leq \frac{p^n}{2} \leq \frac{\vert Z(R) \vert}{2} $ and from this follows that $\vert N^* \vert \leq \frac{\vert Z(R)^* \vert}{2}$. Now, if $S$ is a $\gamma^o -$alliance and some $x \in N^*$ does not belong to $S$, then $x$ is adjacent at least $\left \lceil \frac{\vert Z(R)^* \vert}{2} \right \rceil$ vertices in $S$.
On the other hand, if $N^* \subseteq S$, then $\vert S \vert \geq \vert N^* \vert \geq \left \lceil \frac{\vert N^* \vert} {2} \right \rceil$.\\
Finally, if $N=Z(R)$, then $\G(R)$ is a complete graph and it is well known that $\g^o(\G(R)) = \left \lceil \frac{\vert Z(R)^* \vert}{2} \right \rceil = \left \lceil \frac{\vert N^* \vert}{2} \right \rceil$.

It can be verified that the bound is attained for $R= \mathbb{Z}_9$.
\end{proof}

\begin{Thm}
Let $R$ be a ring and $r$ the minimum number of nilpotent elements of index $2$ contained in the complement of a $\gamma^o-$alliance, then 
$$\g^o(\G(\mathbb{Z}_2 \times R))\leq 1+ r + 2 \g^o(\G(R)).$$ 
Moreover the bound is sharp.
\end{Thm}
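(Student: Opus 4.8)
The plan is to build an explicit global offensive alliance of $\G(\mathbb{Z}_2\times R)$ of cardinality $1+r+2\g^o(\G(R))$ out of an optimally chosen $\g^o$-alliance of $\G(R)$, and then to exhibit a ring attaining this value (throughout I assume $R$ is not a field, so that $\g^o(\G(R))$ is defined). First I would record the structure of $\G(\mathbb{Z}_2\times R)$. Its vertices split into the lower copy $\{0\}\times(R-\{0\})$ and the upper layer $\{1\}\times Z(R)$. Since $(a,b)\sim(c,d)$ iff $ac=0$ in $\mathbb{Z}_2$ and $bd=0$ in $R$, two upper vertices are never adjacent, so $\{1\}\times Z(R)$ is independent; the vertex $(1,0)$ is adjacent to every lower vertex; a lower vertex $(0,u)$ with $u$ a unit has $(1,0)$ as its unique neighbour; and for $x\in Z(R)^*$ the neighbours of $(0,x)$ are $\{(0,b):b\in Ann(x)-\{0\},\,b\neq x\}\cup\{(1,d):d\in Ann(x)\}$, while the neighbours of $(1,x)$ are exactly $\{(0,b):b\in Ann(x)-\{0\}\}$.

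Next I would fix a $\g^o$-alliance $S_0$ of $\G(R)$ whose complement $\overline{S_0}=Z(R)^*-S_0$ contains exactly $r$ nonzero elements with $x^2=0$ (possible by the definition of $r$), set $N_2=\{x\in\overline{S_0}:x^2=0\}$ so $|N_2|=r$, and propose
$$S=\{(1,0)\}\cup(\{0\}\times S_0)\cup(\{1\}\times S_0)\cup(\{1\}\times N_2),$$
which has cardinality $1+2|S_0|+r=1+r+2\g^o(\G(R))$. It then remains to check that every vertex outside $S$ satisfies the offensive condition. The vertices of $\overline S$ are precisely the lower unit-vertices $(0,u)$, the lower vertices $(0,x)$ with $x\in\overline{S_0}$, and the upper vertices $(1,x)$ with $x\in\overline{S_0}$ and $x^2\neq0$.

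The unit-vertices are immediate, since each has $(1,0)\in S$ as its only neighbour. For $(1,x)$ with $x^2\neq0$ the neighbours are the lower vertices indexed by $N(x)$, the neighbourhood of $x$ in $\G(R)$; of these $|N(x)\cap S_0|$ lie in $S$ and $|N(x)\cap\overline{S_0}|$ lie in $\overline S$, so the required inequality is exactly the offensive condition $\d_{S_0}(x)\geq\d_{\overline{S_0}}(x)+1$ that $x$ already satisfies in $\G(R)$. The genuinely delicate case, and the step I expect to be the main obstacle, is the lower vertex $(0,x)$ with $x\in\overline{S_0}$, because it sees neighbours in both layers and the membership in $S$ of an upper neighbour $(1,d)$ depends on whether $d^2=0$. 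Writing $a=|N(x)\cap S_0|$, $c=|N(x)\cap\overline{S_0}|$, and letting $c_0$ (resp. $c_1$) count those neighbours $d\in N(x)\cap\overline{S_0}$ with $d^2=0$ (resp. $d^2\neq0$), a careful bookkeeping gives $\d_S(0,x)=2a+c_0+1+[x^2=0]$ and $\d_{\overline S}(0,x)=c+c_1$, where $[x^2=0]\in\{0,1\}$ equals $1$ exactly when $x^2=0$. The desired inequality then reduces to $2a+[x^2=0]\geq 2c_1$, which follows from $a\geq c+1\geq c_1+1$. This confirms $S$ is a global offensive alliance and yields the bound.

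Finally, for sharpness I would take $R=\mathbb{Z}_2\times\mathbb{Z}_2$, so that $\G(R)=K_2$, $\g^o(\G(R))=1$, and $R$ has no nonzero element of square zero, whence $r=0$ and the bound reads $\g^o(\G(\mathbb{Z}_2\times R))\leq 3$. A direct analysis of $\G(\mathbb{Z}_2^3)$, which is a triangle with one pendant vertex attached at each corner, shows that the three pendants force any global offensive alliance to contain all three triangle vertices, so $\g^o=3$ and equality holds.
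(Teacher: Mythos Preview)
Your argument is correct and follows the same strategy as the paper: both build an explicit global offensive alliance of $\Gamma(\mathbb{Z}_2\times R)$ of size $1+r+2\gamma^o(\Gamma(R))$ from a $\gamma^o$-alliance $S_0$ of $\Gamma(R)$ whose complement contains exactly $r$ index-$2$ nilpotents, and then verify the offensive condition by splitting into the unit, lower, and upper cases. The only cosmetic differences are that the paper places the $r$ nilpotents in the lower layer (its alliance is effectively $\{(1,0)\}\cup(\{0\}\times(S_0\cup P))\cup(\{1\}\times S_0)$ rather than your $\{(1,0)\}\cup(\{0\}\times S_0)\cup(\{1\}\times(S_0\cup N_2))$) and uses $R=\mathbb{Z}_8$ instead of $R=\mathbb{Z}_2\times\mathbb{Z}_2$ for sharpness; both choices work.
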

\begin{proof}
Let $S'$ be a $\gamma^o-$alliance of $\G(R)$ that satisfies the conditions of the statement and let $P=\{x_1, x_2, \ldots, x_r \}$ be the nilpotent elements of index $2$ contained in $\overline{S'}$. Set $S=\{ (1,0) \} \cup ( \{ 0 \} \times (S' \cup P)) $ and let us show that this set is a global offensive alliance of $\G(\mathbb{Z}_2 \times R)$. To this end, first observe that the elements of $\{0 \} \times U(R) $ satisfy the global offensive alliance condition. For an element $x \in \overline{S' \cup P}$, note that $\d_{\overline{S'}}(x) = \d_P(x) + \d_{\overline{S'} - P}(x)$ and $ \d_{\overline{S}}(0,x) = \d_{\overline{S'} - P}(x) + \d_{\overline{S'}}(x)$, therefore, 
$$    
\begin{array}{ccl}
\d_S(0,x) & = & 1 + 2\d_{S'}(x) +  \d_P(x) \\ 
 & \geq & 1 + 2(\d_{\overline{S'}}(x) + 1) + \d_P(x) \\ 
 & = & 3 + 2\d_{\overline{S'}}(x) + \d_P(x) \\ 
 & = & 3 + \d_{\overline{S'}}(x) + \d_P(x) + \d_{\overline{S'} - P}(x) + \d_P(x)\\ 
 & = & 3+\d_{\overline{S}}(0,x) + 2\d_P(x) \\
 & \geq & \d_{\overline{S}}(0,x) +1.
\end{array} $$

Moreover, since $\d_{\overline{S}}(1,x)= \d_{\overline{S' \cup P}}(x) \leq \d_{\overline{S'}}(x)$, we have
$$
\begin{array}{ccl}
\d_S(1,x) & = & \d_{S' \cup P}(x) \\ 
 & = & \d_{S'}(x) + \d_P(x) \\ 
 & \geq & \d_{\overline{S'}}(x) +1 + \d_P(x) \\ 
 & \geq  & \d_{\overline{S}}(1,x) +1 + \d_P(x) \\ 
 & \geq & \d_{\overline{S}}(1,x) +1.
\end{array} $$
A similar analysis to the previous one shows that if $x \in P$ then $\d_S(1,x) \geq \d_{\overline{S}}(1,x) +1$.

The bound is attained for $R = \mathbb{Z}_8$.
\end{proof}

\begin{Prop}
Let $R$ be a ring, then
\begin{enumerate}
  \item $\gamma^o(\Gamma(\mathbb{Z}_2\times R))\geq \gamma^o(\Gamma(R))+1$;
  \item and $\gamma^o(\Gamma(\mathbb{Z}_2\times R))\leq 2\gamma^o(\Gamma(R))+1$  if $R$ is a reduced ring (i.e., a ring not containing non-zero elements $x$ such that $x^2=0$).
\end{enumerate}
\end{Prop}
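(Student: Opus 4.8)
My plan is to treat the two inequalities separately, since the second turns out to be essentially a special case of a result already proved in this section.

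\textbf{Part (2).} For the upper bound I would simply invoke the preceding theorem, which asserts $\gamma^o(\Gamma(\mathbb{Z}_2\times R)) \leq 1 + r + 2\gamma^o(\Gamma(R))$, where $r$ is the minimum number of index-$2$ nilpotent elements lying in the complement of a $\gamma^o$-alliance of $\Gamma(R)$. A reduced ring has no nonzero element $x$ with $x^2 = 0$, hence no index-$2$ nilpotents at all; so every $\gamma^o$-alliance has complement containing none of them and $r = 0$. The bound then collapses to $\gamma^o(\Gamma(\mathbb{Z}_2\times R)) \leq 2\gamma^o(\Gamma(R)) + 1$, and this part requires no new argument.

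\textbf{Part (1).} The idea is to project a minimum global offensive alliance $S$ of $\Gamma(\mathbb{Z}_2\times R)$ down to $\Gamma(R)$, losing at least one vertex. First I would record the adjacencies: the neighbours of a vertex $(1,x)$ with $x\in Z(R)^*$ are exactly the vertices $(0,d)$ with $d\in Z(R)^*$ and $xd=0$, so \emph{every} neighbour of $(1,x)$ lies in the copy $\{0\}\times Z(R)^*$ of $\Gamma(R)$; this is the structural fact that makes the projection work. I then set $S' = \{x\in Z(R)^*: (0,x)\in S \text{ or } (1,x)\in S\}$ and aim to show $S'$ is a global offensive alliance of $\Gamma(R)$.

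To verify this, take $x\in Z(R)^*\setminus S'$; then $(1,x)\notin S$, so the offensive condition for $S$ applies at the vertex $(1,x)$, and since all its neighbours have the form $(0,d)$ with $xd=0$ it reads $|D_x\cap S_0|\geq |D_x\setminus S_0|+1$, where $D_x=\{d\in Z(R)^*: xd=0\}$ and $S_0=\{d\in Z(R)^*:(0,d)\in S\}$. Because $S_0\subseteq S'$, the same inequality holds with $S'$ in place of $S_0$. Finally I would pass from $D_x$ to the $\Gamma(R)$-neighbourhood $N(x)$: if $x^2\neq 0$ then $D_x=N(x)$ and the inequality is exactly the offensive condition for $x$ in $\Gamma(R)$; if $x^2=0$ then $D_x=N(x)\sqcup\{x\}$ with $x\notin S'$, which only strengthens it. Thus $S'$ satisfies the offensive condition at every $x\notin S'$, and $S'$ is nonempty (an empty $S'$ would force some $(0,x)$ to have too few neighbours in $S$), so $\gamma^o(\Gamma(R))\leq |S'|$.

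It remains to compare cardinalities. Splitting $S$ into the four vertex types $(1,0)$, $(0,u)$ with $u$ a unit, $(0,x)$ with $x\in Z(R)^*$, and $(1,x)$ with $x\in Z(R)^*$, the last two types together contribute at least $|S'|$. I then need at least one vertex of the first two types inside $S$: if $(1,0)\in S$ this is immediate, and if $(1,0)\notin S$ then each degree-one vertex $(0,u)$ ($u$ a unit, whose only neighbour is $(1,0)$) is forced into $S$ by its own offensive condition, so such a vertex is present anyway. Hence $|S|\geq |S'|+1\geq \gamma^o(\Gamma(R))+1$, as required. The step I expect to be most delicate is the verification that $S'$ is an alliance — specifically the bookkeeping around the case $x^2=0$, where $x$ itself belongs to $D_x$, together with the inclusion $S_0\subseteq S'$ that lets the offensive condition of the product descend to $\Gamma(R)$.
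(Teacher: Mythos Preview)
Your proposal is correct and follows essentially the same route as the paper. For Part~(1) both arguments project a $\gamma^o$-alliance of $\Gamma(\mathbb{Z}_2\times R)$ onto $Z(R)^*$ and exploit the key structural fact that every neighbour of a vertex $(1,x)$ lies in $\{0\}\times Z(R)^*$; the paper phrases this as a contradiction argument while you give the direct version and handle the $x^2=0$ bookkeeping more explicitly. For Part~(2) the paper repeats the explicit construction $S=\{(1,0)\}\cup(\{0\}\times T)\cup(\{1\}\times T)$, whereas your appeal to the preceding theorem with $r=0$ is a legitimate and slightly more economical shortcut.
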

\begin{proof}
\emph{(1)} Let $S\subset Z(\mathbb{Z}_2\times R)^*$ be a dominating set, since $(1,0)$ is the unique neighbor of $(0,1)$, it must be $(1,0)\in S$ or $(0,1)\in S$. By cases.\\
Suppose $(1,0)\in S$ and consider $\pi_2:\mathbb{Z}_2\times Z(R)^*\rightarrow Z(R)^*$ the projection to the second factor, $S_1=\{(0,b):b\in Z(R)^*\}$ and $S_2=\{(1,b):b\in Z(R)^*\}$, then $|\pi_2(S_1)|+|\pi_2(S_2)|<\gamma^o(\Gamma(R))$, thus, $T=\pi_2(S_1)\cup \pi_2(S_2)$ is not a global offensive alliance of $\Gamma(R)$, which implies the existence of $y\in \overline{\pi_2(S_1)\cup \pi_2(S_2)}$ such that
$$\delta_{T}(y)<\delta_{\overline{T}}+1,$$
thus,
$$\delta_{S}(1,y)<\delta_{\overline{S}}(1,y)+1,$$
so $S$ is not a global offensive alliance of $\Gamma(\mathbb{Z}_2\times R)$.\\
Now, if $(0,1)\in S$ and $(1,0)\notin S$ we may observe that $|S|\ge |Z(R)^*|+|U(R)|$, then $\gamma^o(\Gamma(R))+1>|Z(R)^*|+|U(R)|$, thus, $\gamma^o(\Gamma(R))>|R^*|+1$ a contradiction.\\
Hence, $\gamma^o(\Gamma(\mathbb{Z}_2\times R))\geq \gamma^o(\Gamma(R))+1$.\\
For the second statement, observe that if $T\subset Z(R)^*$ is a minimal global offensive alliance, then $$S=\{(1,0)\}\cup (\{0\}\times T)\cup (\{1\}\times T)$$
is a global offensive alliance in $\Gamma(\mathbb{Z}_2\times R)$. Therefore, $$\gamma^o(\Gamma(\mathbb{Z}_2\times R))\leq 2\gamma^o(\Gamma(R))+1.$$
\end{proof}

\begin{Thm}
If $R$ is a ring such that $\Gamma(R)$ is a complete graph and $F$ a field with $|F|\geq 3$, then 
\small
$$\gamma^o(\Gamma(F \times R)) = |Z(R)^*| + \min \left\{ |U(R)|,|F^*|, 2 + \left\lfloor \frac{|U(R)|-|Z(R)^*|}{2} \right\rfloor +  \left\lfloor \frac{|F^*|}{2} \right\rfloor \right\}.$$
\end{Thm}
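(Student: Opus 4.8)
The plan is to compute the fine structure of $\G(F\times R)$ explicitly and then treat the alliance problem as a small optimization over how many vertices of each ``type'' lie in a candidate $S$. Write $z=|Z(R)^*|$, $u=|U(R)|$, $f=|F^*|\ge 2$, and let $m$ denote the minimum appearing in the statement. By the Anderson--Livingston classification \cite{AndLiv}, apart from the single ring $R\cong\mathbb Z_2\times\mathbb Z_2$ (which I would dispatch by direct computation, its graph $\G(R)=K_2$ having idempotent rather than nilpotent vertices), completeness of $\G(R)$ forces $xy=0$ for all $x,y\in Z(R)$; in particular every proper zero-divisor of $R$ squares to $0$ and $R$ is local. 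I then partition $Z(F\times R)^*$ into $A=\{0\}\times Z(R)^*$, $B=\{0\}\times U(R)$, $C=F^*\times\{0\}$ and $D=F^*\times Z(R)^*$, of sizes $z,u,f,fz$. Checking when $(\a,b)(\a',b')=(0,0)$ gives: $A$ is a clique; each vertex of $A$ is adjacent to every vertex of $C$ and of $D$; each vertex of $B$ is adjacent exactly to $C$, hence has degree $f$; each vertex of $C$ is adjacent exactly to $A\cup B$, hence has degree $z+u$; and each vertex of $D$ is adjacent exactly to $A$ (here the identity $b^2=0$ is what makes $(\a,b)\sim(0,b)$), hence has degree $z$. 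From Facts \ref{fac1} one computes $u=(z+1)(p^r-1)\ge z+1$, so $u>z$; this is what guarantees that $\lfloor(u-z)/2\rfloor+1$ is positive and the constructions below are well defined.

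For the upper bound I would exhibit three global offensive alliances, all containing $A$, so that the alliance condition is automatic at every vertex of $D$ (all $z$ of its neighbours lie in $S$) and vacuous on $A$. These are $S_1=A\cup B$ of size $z+u$, where each vertex of $C$ outside $S$ sees all $z+u$ of its neighbours in $S$; $S_2=A\cup C$ of size $z+f$, where each vertex of $B$ outside $S$ sees all $f$ of its neighbours in $S$; and $S_3=A\cup B'\cup C'$ with $B'\subseteq B$, $C'\subseteq C$ of sizes $\lfloor(u-z)/2\rfloor+1$ and $\lfloor f/2\rfloor+1$. Validity of $S_3$ reduces to the condition at a $B$-vertex, $|C'|\ge\lfloor f/2\rfloor+1$, and the condition at a $C$-vertex, $z+|B'|\ge\lfloor(z+u)/2\rfloor+1$ (i.e. $|B'|\ge\lfloor(u-z)/2\rfloor+1$), both holding by construction; the sets fit since $|C'|\le f$ and, using $u>z$ hence $u\ge 2$, $|B'|<u$. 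Thus $\g^o(\G(F\times R))\le z+m$, the third candidate being the relevant one precisely when $f\ge3$ (when $f=2$ one has $2+\lfloor(u-z)/2\rfloor+\lfloor f/2\rfloor\ge3>f$, so it is never the minimum).

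For the lower bound, given any alliance $S$ I set $a=|S\cap A|$, $\b=|S\cap B|$, $c=|S\cap C|$ and analyse by the size of $a$. If $a=z$, the only active conditions are at the $B$- and $C$-vertices outside $S$, yielding: either $\b=u$ or $c\ge\lfloor f/2\rfloor+1$; and either $c=f$ or $\b\ge\lfloor(u-z)/2\rfloor+1$. Splitting into ``$\b=u$'', ``$c=f$'', and ``$\b<u$ and $c<f$'' gives $\b+c\ge\min\{u,\,f,\,2+\lfloor(u-z)/2\rfloor+\lfloor f/2\rfloor\}=m$, so $|S|\ge z+m$. If $a<z$, there is an $A$-vertex outside $S$, and I argue $|S|\ge z+f\ge z+m$ in two sub-cases. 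When $a\ge\lfloor z/2\rfloor+1$ (which forces $z\ge2$), the condition at that $A$-vertex gives $a+c+|S\cap D|\ge\lceil(z+f+fz)/2\rceil\ge z+f$ via $(f-1)(z-1)\ge1$. When $a\le\lfloor z/2\rfloor$, every $D$-vertex outside $S$ violates its condition, so $D\subseteq S$ and $|S|\ge fz$; for $z\ge2$ this exceeds $z+f$, while for $z=1$ (so $a=0$) the surviving $A$-vertex needs $c+|S\cap D|\ge f+1$, again giving $|S|\ge f+1=z+f$.

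The main obstacle I expect is exactly this last regime $a<z$: one must rule out that abandoning part of $A$ and compensating with vertices of $D$ (there are $fz$ of them, each attached only to $A$) could beat the clean solution $A\subseteq S$. This is where the ring-theoretic input $u>z$ and the arithmetic inequality $(f-1)(z-1)\ge1$ (which uses $|F|\ge3$, i.e. $f\ge2$) do the real work. A secondary nuisance is the degeneration of $S_3$ when $f=2$ and the special ring $\mathbb Z_2\times\mathbb Z_2$; both are handled by the remarks above, the former because the third term then cannot equal $m$, the latter by a direct check.
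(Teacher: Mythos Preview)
Your argument is correct and is organized differently from the paper's. Both proofs share the partition $A=\{0\}\times Z(R)^*$, $B=\{0\}\times U(R)$, $C=F^*\times\{0\}$, $D=F^*\times Z(R)^*$ and the same three candidate alliances for the upper bound. The divergence is in the lower bound. The paper first proves a structural lemma (its Step~1): by an exchange argument it shows that \emph{every} $\gamma^o$-alliance must contain all of $A$, arguing that if some $(0,x)\notin S$ one can replace $S\cap(F^*\times Z(R)^*)$ by the missing part of $\{0\}\times Z(R)^*$ and obtain a strictly smaller alliance. Only after forcing $A\subseteq S$ does the paper read off the bound from the $B$- and $C$-conditions. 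You bypass this replacement step entirely and instead lower-bound $|S|$ directly by cases on $a=|S\cap A|$: when $a=z$ you recover the paper's endgame, and when $a<z$ you use either the alliance condition at a missing $A$-vertex (sub-case $a\ge\lfloor z/2\rfloor+1$, via $(f-1)(z-1)\ge1$) or the forced inclusion $D\subseteq S$ (sub-case $a\le\lfloor z/2\rfloor$) to get $|S|\ge z+f\ge z+m$. Your route is shorter and more self-contained for the inequality at hand; the paper's route yields the extra structural statement that minimum alliances actually contain $\{0\}\times Z(R)^*$, which is of independent interest but not needed for the equality itself. One cosmetic point: your claim ``$|B'|<u$'' should really be ``$|B'|\le u$'' (equality can occur in the degenerate field case $z=0$, $u=1$), but this does not affect the construction.
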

\normalsize
\begin{proof}
Let $A$ be the set on which the minimum is being considered. If $R= \mathbb{Z}_2 \times \mathbb{Z}_2$ then $\min A = |U(R)|=1$, $|Z(R)^*|=2$ and the set $S = \{ (0,0,1), (0,1,0), (0,1,1) \}$ is a $\gamma^o$-alliance.\\
If $R \neq \mathbb{Z}_2 \times \mathbb{Z}_2$ then $R$ is a local ring in which $xy=0$ for any $x,y \in Z(R)$.\\
If $R$ is a field, $|Z(R)^*|=0$, $\min A = \min\{|U(R)|, |F^*|\}$ and in this case, the result is simply the Lemma \ref{lema23}.\\
In other case, the proof is completed using following steps.

Step 1. \emph{ Any vertex of $\{0\}\times Z(R)^*$ must belong to each $\gamma^o$-alliance}.

Indeed, if $S \subseteq Z(F \times R)^* $ is a global offensive alliance and for some $x \in Z(R)^*$, $(0,x) \notin S$, then $S$ must contain at least $\left\lfloor \frac{|Z(R)^*|(|F^*|+1)+ |F^*|-1}{2} \right\rfloor +1$ of the neighbors of $(0,x)$, which are distributed among the sets $F^* \times \{0\}$, $\{0\} \times Z(R)^*$ and $F^* \times Z(R)^*$.\\
If $|S \cap (F^* \times Z(R)^*)| > |Z(R)^*|$ then $S-(S \cap (F^* \times Z(R)^*)) \cup (\{0\} \times Z(R)^*)$ is a global offensive alliance containing fewer vertices than $S$ does.
Now, if $|S \cap F^* \times Z(R)^*| \leq |Z(R)^*|$, some vertex of $F^* \times Z(R)^*$ not found in $S$ and  therefore it is must to have 
\begin{equation}\label{choco1}
|S \cap (\{ 0\} \times Z(R)^*)| \geq \left \lfloor \frac{|Z(R)^*|}{2} \right \rfloor + 1.
\end{equation}
And in order for the global offensive alliance condition to be satisfied at the vertex $(0,x)$ the following inequality must also be satisfied 
\begin{equation}\label{choco2}
|S \cap F^* \times Z(R)^*| \geq  \left \lfloor \frac{|Z(R)^*|}{2} \right \rfloor + 1 + |S \cap F^* \times \{ 0\}|.
 \end{equation}  
 
Thus, of inequalities \ref{choco1} and \ref{choco2} is obtained
 \small
 $$|(\{0\} \times Z(R)^*)-(S \cap (\{ 0\} \times Z(R)^*))| \leq \left \lceil \frac{|Z(R)^*|}{2} \right \rceil - 1 < \left \lfloor \frac{|Z(R)^*|}{2} \right \rfloor + 1 + |S \cap F^* \times \{ 0 \}|.$$
 \normalsize
Therefore $S_1=(S-(S\cap (F^* \times Z(R)^*))) \cup ((\{0\}\times Z(R)^*) - S) $ is a global offensive alliance and
$
|S_1|=|(S-(S\cap (F^* \times Z(R)^*)))| + |(\{0\}\times Z(R)^*) - S|=|S\cap (F^* \times \{0\})|+|Z(R)^*| = |S\cap (F^* \times \{0\})| + |S \cap (\{ 0\} \times Z(R)^*)| + |\{0\}\times Z(R)^* - S| < 2\left \lfloor \frac{|Z(R)^*|}{2} \right \rfloor + 2|S\cap (F^* \times \{0\})| +2 \leq |S|.$

\normalsize 

Step 2. \emph{The inequality $\gamma^o(\Gamma(F \times R)) \leq |Z(R)^*| + \min A$ holds}.

If $B \subseteq \{ 0 \} \times U(R)$ and $C \subseteq F^* \times \{ 0 \}$ are subset whose cardinal numbers are $\left\lfloor \frac{|U(R)|-|Z(R)^*|}{2} \right\rfloor + 1$ y $\left\lfloor \frac{|F^*|}{2} \right\rfloor + 1$ respectively, then the subsets $ \left( \{ 0 \} \times Z(R)^* \right)$ $\cup B \cup C$, $\left( \{ 0 \} \times Z(R)^* \right) \cup \left( F^* \times \{ 0 \} \right)$ y $\left( \{ 0 \} \times Z(R)^* \right) \cup \left( U(R) \times \{ 0 \} \right)$ are global offensive alliances, with what is certainly $\gamma^o(\Gamma(F \times R))\leq |Z(R)^*|+ \min A$.

Finally, if $S$ is a $\gamma^o-$alianza of $\Gamma(F \times R)$, by Step 1 we know that $S$ has the form $\{ 0 \} \times Z(R)^* \cup A_1$.\\
Now, if $A_1 \neq \{ 0 \} \times U(R)$, $F^* \times \{ 0 \}$ we have $(0,x) \in \{ 0 \} \times U(R) - S$ y $(y,0) \in F^* \times \{ 0 \}-S$. In order for the global offensive alliance condition to be met in these two vertices, the set $S$ is required to contain at least $\left\lfloor \frac{|U(R)|-|Z(R)^*|}{2} \right\rfloor +1$ vertices of $\{ 0 \} \times U(R)$ and at least $\left\lfloor \frac{|F^*|}{2} \right\rfloor +1$ vertices of $F^* \times \{ 0 \} $. In this way $$\gamma^o(\Gamma(F \times R)) \geq |Z(R)^*|+2+ \left\lfloor \frac{|U(R)|-|Z(R)^*|}{2} \right\rfloor + \left\lfloor \frac{|F^*|}{2} \right\rfloor \geq |Z(R)^*| + \min A.$$
\end{proof}

\begin{Thm}
Let $F$ and $K$ be two fields with $|F|\geq 3$, then $\gamma^o(\Gamma(\mathbb{Z}_2 \times K \times F)) = 1 + \min \left \{ 2|K^*|, 2|F^*|, |F^*| + \left \lfloor \frac{|K^*|}{2} \right \rfloor + 1, |K^*|+ \left \lfloor \frac{|F^*|}{2} \right \rfloor + 1 \right \}$.
\end{Thm}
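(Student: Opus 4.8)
The plan is to work directly inside the product of three fields $\mathbb{Z}_2 \times K \times F$, rather than to invoke the previous theorem: regrouping as $F\times(\mathbb{Z}_2\times K)$ would require $\Gamma(\mathbb{Z}_2\times K)$ to be complete, which happens only when $K=\mathbb{Z}_2$, so a self-contained analysis is needed. Writing $p=|K^*|$ and $q=|F^*|$, I would first classify the proper zero-divisors by their pattern of zero coordinates into six families: the single vertex $A=(1,0,0)$; the families $B=\{0\}\times K^*\times\{0\}$ and $C=\{0\}\times\{0\}\times F^*$; the families $D=\{1\}\times K^*\times\{0\}$ and $E=\{1\}\times\{0\}\times F^*$; and $G=\{0\}\times K^*\times F^*$ (vertices $(1,b,c)$ with $b,c\neq 0$ are units, not vertices). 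A direct computation of annihilators gives the degrees and, crucially, shows that every vertex of a single family shares the same neighborhood: each $B$-vertex is adjacent to $A$, all of $C$, all of $E$ (degree $1+2q$); each $C$-vertex to $A$, all of $B$, all of $D$ (degree $1+2p$); each $D$-vertex to all of $C$ only (degree $q$); each $E$-vertex to all of $B$ only (degree $p$); and each $G$-vertex only to $A$ (degree $1$).

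Next I would record that $(1,0,0)$ lies in every $\gamma^o$-alliance $S$: since $K\times F$ has $pq\geq 2$ units, this is exactly Lemma \ref{lema24}(ii); it accounts for the leading $1$ and disposes of the $G$-vertices, whose unique neighbor $A$ is already in $S$. Using the uniform-neighborhood property, the whole alliance condition then depends only on the integers $s_B,s_C,s_D,s_E$ counting the members of each family in $S$. Translating $\delta_S(v)\geq\lfloor\delta(v)/2\rfloor+1$ at an external vertex of each family (with $A\in S$) yields the system: (I) $s_D=p$ or $s_C\geq\lfloor q/2\rfloor+1$; (II) $s_E=q$ or $s_B\geq\lfloor p/2\rfloor+1$; (III) $s_B=p$ or $s_C+s_E\geq q$; (IV) $s_C=q$ or $s_B+s_D\geq p$. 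Thus $\gamma^o=1+\min(s_B+s_C+s_D+s_E)$ over feasible tuples, with $s_G\geq0$ only increasing $|S|$.

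For the upper bound I would exhibit four feasible tuples, one realizing each term of the minimum: $(s_B,s_C,s_D,s_E)=(0,q,0,q)$ gives $2q$; $(p,0,p,0)$ gives $2p$; $(\lfloor p/2\rfloor+1,\,q,\,0,\,0)$ gives $q+\lfloor p/2\rfloor+1$; and $(p,\,\lfloor q/2\rfloor+1,\,0,\,0)$ gives $p+\lfloor q/2\rfloor+1$. Verifying (I)--(IV) for each is routine, using $q\geq 2$ (so $q\geq\lfloor q/2\rfloor+1$) and $p\geq 1$; writing these tuples out as explicit vertex sets (always adjoining $A$) produces the corresponding global offensive alliances and hence $\gamma^o\leq 1+\min A$.

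The lower bound is the delicate step, handled by a finite case analysis on the disjunctions in (III) and (IV). If $s_B=p$ and $s_C=q$, then $s_B+s_C+s_D+s_E\geq p+q\geq\min\{2p,2q\}$. If $s_B=p$ but $s_C<q$, then (I) forces either $s_C\geq\lfloor q/2\rfloor+1$ (giving $\geq p+\lfloor q/2\rfloor+1$) or $s_D=p$ (giving $\geq 2p$); the symmetric split treats $s_C=q,\ s_B<p$. Finally, if $s_B<p$ and $s_C<q$, then (III) and (IV) give $s_C+s_E\geq q$ and $s_B+s_D\geq p$, whence the sum is $\geq p+q\geq\min\{2p,2q\}$. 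In each branch the sum dominates one of the four listed quantities, so $s_B+s_C+s_D+s_E\geq\min\{2p,\,2q,\,q+\lfloor p/2\rfloor+1,\,p+\lfloor q/2\rfloor+1\}$, matching the upper bound. The main obstacle is pinning down the reduction to (I)--(IV) exactly: one must confirm that the four relevant families genuinely have uniform neighborhoods and compute every degree and every floor threshold correctly, after which the optimization is elementary.
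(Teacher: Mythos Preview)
Your proposal is correct and follows essentially the same route as the paper: the paper also fixes $(1,0,0)$ in every $\gamma^o$-alliance, exhibits for the upper bound the same four alliances (your tuples $(p,0,p,0)$, $(0,q,0,q)$, $(\lfloor p/2\rfloor+1,q,0,0)$, $(p,\lfloor q/2\rfloor+1,0,0)$ are exactly its examples (i)--(iv)), and obtains the lower bound by the same case split on whether $\{0\}\times K^*\times\{0\}\subseteq S$, $\{0\}\times\{0\}\times F^*\subseteq S$, or neither. Your explicit constraint system (I)--(IV) makes the bookkeeping cleaner, but it is not a genuinely different argument.
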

\begin{proof}
Since $|\{0\} \times K^* \times F^*|\geq 2$ the vertex $(1,0,0)$ is in each $\gamma^o-$alliance. In addition, If $A$ is the set over which the minimum is under consideration, there are global offensive alliances of cardinal $1+a$ for each $a \in A$. For example
\begin{enumerate}
\item[i)] $ \{1,0,0\} \cup (\{0\} \times K^* \times \{0\}) \cup (\{1\} \times K^* \times \{0\})$,
\item[ii)] $ \{1,0,0\} \cup (\{0\} \times  \{0\} \times F^*) \cup (\{1\} \times  \{0\} \times F^* )$,
\item[iii)] $\{1,0,0\} \cup (\{0\} \times \{0\}\times F^*) \cup X $ (with $X \subseteq \{0\} \times K^* \times \{0\}$, $|X|=\lfloor \frac{|K^*|}{2} \rfloor +1$),
\item[iv)] $\{1,0,0\} \cup (\{0\} \times K^* \times \{0\}) \cup Y$ (with $Y \subseteq \{0\} \times \{0\} \times F^*$, $|Y|=\lfloor \frac{|F^*|}{2} \rfloor +1$),
\end{enumerate}
hence $\gamma^o(\Gamma( \mathbb{Z}_2 \times K \times F)) \leq 1 + \min A$.\\
Now, if $S$ is a $\gamma^o-$alliance and there are vertices $(0,k,0)$, $(0,0,f) \notin S$ with $k \in K^*$ y $f \in F^*$, then $|S| \geq 1 + |K^*| + |F^*| \geq 1 + \min A$ and we are done.
If $\{0\} \times K^* \times \{0\} \subseteq  S $ y $(1,k,0) \notin S$ for some $k \in K^*$ then $|S| \geq 2 + |K^*| + \left \lfloor \frac{|F^*|}{2} \right \rfloor \geq 1+ \min A$.\\
Similarly, if $\{0\} \times \{0\} \times F^* \subseteq S$ and some $(1,0,f) \notin S$.
\end{proof}

\section{Rings with small global offensive alliance number}\label{sec3}

\begin{Thm}
Let $R$ be a finite commutative ring, then $\gamma^o(\Gamma(R))=1$ if and only if $R$ is isomorphic to any of the following rings $\mathbb{Z}_4$, $\mathbb{Z}_2[X]/(X^2)$, $\mathbb{Z}_9$, $\mathbb{Z}_3[X]/(X^2)$, $\mathbb{Z}_8$, $\mathbb{Z}_2[X]/(X^3), \mathbb{Z}_2 \times F$ or $\mathbb{Z}_4[X]/(2X, X^2 -2)$, where $F$ is a field.
\end{Thm}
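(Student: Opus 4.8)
The plan is to first translate the numerical condition $\gamma^o(\Gamma(R)) = 1$ into a purely graph-theoretic shape of $\Gamma(R)$, and then classify the finite commutative rings realizing that shape. For the first step I would observe that a single vertex $S = \{w\}$ is a global offensive alliance precisely when $\d_S(v) \geq \d_{\overline{S}}(v) + 1$ holds for every $v \in \overline{S}$. Since $\d_S(v) \leq 1$, this forces $\d_S(v) = 1$ and $\d_{\overline{S}}(v) = 0$ for all $v \neq w$; that is, $w$ is adjacent to every other vertex while no edge avoids $w$. Hence $\gamma^o(\Gamma(R)) = 1$ if and only if $\Gamma(R)$ is a star $K_{1,t}$, with the degenerate cases $K_{1,0} = K_1$ (when $|Z(R)^*| = 1$) and $K_{1,1} = K_2$ included. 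The problem thus reduces to determining all finite commutative rings whose zero-divisor graph is a star.

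The sufficiency $(\Leftarrow)$ is then routine. For $\mathbb{Z}_2 \times F$, Lemma \ref{lema23} gives $\gamma^o = \min\{1,\,|F|-1\} = 1$ directly; for each of the finitely many remaining rings a short computation shows that $\Gamma(R)$ is $K_1$ (for $\mathbb{Z}_4, \mathbb{Z}_2[X]/(X^2)$), $K_2$ (for $\mathbb{Z}_9, \mathbb{Z}_3[X]/(X^2)$) or $K_{1,2}$ (for $\mathbb{Z}_8, \mathbb{Z}_2[X]/(X^3), \mathbb{Z}_4[X]/(2X, X^2 - 2)$), so that $\gamma^o = 1$ in every case.

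For the necessity $(\Rightarrow)$ I would use that a finite commutative ring decomposes as a product of local rings, and split into the non-local and local cases. If $R \cong A \times B$ with $A, B$ nonzero, then $(1,0)$ and $(0,1)$ are adjacent zero-divisors of degrees $|B| - 1$ and $|A| - 1$ respectively; since a star has at most one vertex of degree $\geq 2$, we must have $|A| = 2$ or $|B| = 2$, say $B = \mathbb{Z}_2$. A direct inspection of the neighbourhoods then shows that any vertex $(a,1)$ with $a \in Z(A)^*$ fails to be adjacent to the only possible centre $(0,1)$, forcing $Z(A)^* = \emptyset$, i.e. $A = F$ a field; and no decomposition with three or more factors survives this analysis. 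This yields exactly $R \cong \mathbb{Z}_2 \times F$.

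The local case is the main obstacle and the heart of the argument. Write $(R, M)$ with $M = Z(R)$, let $c$ be the centre of the star and $v_1, \ldots, v_t$ its leaves, so $M = \{0, c, v_1, \ldots, v_t\}$. The cases $t = 0$ and $t = 1$ give $|M| \in \{2, 3\}$, whence by Facts \ref{fac1} one has $|R| \in \{4, 9\}$ and reads off $\mathbb{Z}_4, \mathbb{Z}_2[X]/(X^2), \mathbb{Z}_9, \mathbb{Z}_3[X]/(X^2)$. For $t \geq 2$ the key is to show $R$ must be a chain ring. First, $Ann(M)$ is an ideal containing $0$ and $c$ but no leaf, since distinct leaves $v_i, v_j$ satisfy $v_i v_j \neq 0$; hence $Ann(M) = \{0, c\}$, and being a nonzero $R/M$-vector space of order $2$ this forces the residue field to be $\mathbb{Z}_2$, so $|M| = |R|/2$. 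A short argument rules out $v_i^2 = 0$ for any leaf (otherwise $c + v_i$ would be a second neighbour of $v_i$), giving $Ann(v_i) = \{0, c\}$ of order $2$; therefore $|R v_i| = |R|/|Ann(v_i)| = |R|/2 = |M|$, so $M = R v_i$ is principal and $R$ is a chain ring with residue field $\mathbb{Z}_2$. In such a ring adjacency is governed by the $M$-adic valuation, and one checks a star arises only when the nilpotency length equals $3$, i.e. $|R| = 8$; enumerating the order-$8$ chain rings produces exactly $\mathbb{Z}_8, \mathbb{Z}_2[X]/(X^3), \mathbb{Z}_4[X]/(2X, X^2-2)$, while the two remaining order-$8$ local rings have $M^2 = 0$ and give $K_3$, which is not a star. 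In particular no local star with $t \geq 3$ exists, and the classification is complete.
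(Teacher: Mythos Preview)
Your argument is correct, but it follows a genuinely different route from the paper's. The paper's proof is extremely short: it invokes \cite{RodSigFav} to conclude that $\gamma^o(\Gamma(R))=1$ forces $\Gamma(R)$ to be a single vertex or a star, and then for $|\Gamma(R)|\geq 4$ it quotes Anderson--Livingston's classification \cite[Theorem~2.13]{AndLiv} of rings whose zero-divisor graph is a star (which yields exactly $\mathbb{Z}_2\times F$), while the finitely many cases $|\Gamma(R)|\leq 3$ are read off from the tabulation in \cite{AndLiv}. You instead derive the star characterisation directly from the definition of a global offensive alliance, and then replace the citation of \cite[Theorem~2.13]{AndLiv} by a self-contained structural analysis: the non-local case via the product decomposition, and the local case via the computation $Ann(M)=\{0,c\}$, forcing residue field $\mathbb{Z}_2$, principal maximal ideal, and nilpotency length at most $3$. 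The paper's approach buys brevity and offloads the ring theory onto existing literature; your approach buys self-containment and explains structurally why no local star with $t\geq 3$ can occur. One small point worth making explicit in your write-up: when you assert $c\in Ann(M)$, you are using that $Ann(M)\neq 0$ (from Facts~\ref{fac1}) together with your observation that no leaf lies in $Ann(M)$, so the only nonzero element available is $c$; as written this step is slightly implicit.
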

\begin{proof}
By \cite{RodSigFav} $\Gamma(R)$ is a vertex o a star graph. Now, if  $|\Gamma(R)|< 4$ then $R\cong \mathbb{Z}_6,\mathbb{Z}_8, \mathbb{Z}_2[X]/(X^3)$ or $\mathbb{Z}_4[X]/(2X, X^2 -2)$ (ver \cite{AndLiv} ). If  $|\Gamma(R)| \geq 4$ by \cite[Teorema 2.13]{AndLiv} $R\cong \mathbb{Z}\times F$ .\\
For the converse just verify each ring.
\end{proof}

 A graph $\Gamma(V,E)$ is a $4$\emph{-book} if $V$ is the union of subsets $X_1,X_2,\cdots,X_r$ such that $3\leq |X_i|\leq 4$ and
\begin{itemize}
\item there exists  $v_1, v_2 \in V$, a pair of distinct vertices such that if $ i \neq j$ then $X_i \cap X_j = \{v_1, v_2 \}$,
\item for $i \neq j$, there are no edges connecting vertices of $X_i \setminus \{v_1, v_2 \}$ with vertices of $X_j \setminus \{ v_1, v_2\}$,
\item if $y \in V \setminus \{ v_1, v_2\}$, then  is adjacent to both, $v_1$ and $v_2$.
\end{itemize}
Each one of these $r$ induced subgraphs, $\langle X_i \rangle$, is called $4$\emph{-book page} $\Gamma$.


\begin{Thm}
Let $R$ be a finite commutative ring, then
$\gamma^o(\Gamma(R))=2$ if and only if $R$ is isomorphic to one of the following rings $\mathbb{Z}_3 \times k$ (where $k$ is a field), $\mathbb{Z}_2 \times \mathbb{Z}_4$, $\mathbb{Z}_2 \times \mathbb{Z}_2[X]/(X^2)$, $\mathbb{Z}_{16}$, $\mathbb{Z}_2[X]/(X^4)$, $\mathbb{Z}_4[X]/(2X, X^3-2)$, $\mathbb{Z}_4[X]/(X^2-2)$, $\mathbb{Z}_4[X]/(X^2+2X+2)$, $\mathbb{F}_4[X]/(X^2)$, $\mathbb{Z}_4[X]/(X^2+X+1)$,   $\mathbb{Z}_2[X,Y]/(X,Y)^2$, $\mathbb{Z}_4[X]/(2,X)^2$, $\mathbb{Z}_{27}$, $\mathbb{Z}_3[X]/(X^3)$, $\mathbb{Z}_9[X]/(X^2-3,3X)$, $\mathbb{Z}_9[X]/(X^2-6,3X)$, $\mathbb{Z}_{25}$, $\mathbb{Z}_5[X]/(X^2)$.

\end{Thm}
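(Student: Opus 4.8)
The plan is to prove both implications by first reducing to the \emph{shape} of the zero-divisor graph and then to the classification of the finite commutative rings realizing each shape. For the backward implication I would group the listed rings according to the form of $\Gamma(R)$ and verify $\gamma^o = 2$ in each group. The family $\mathbb{Z}_3\times k$ gives $\Gamma(R)=K_{2,|k|-1}$, so Lemma \ref{lema23} yields $\gamma^o=\min\{2,|k|-1\}=2$ (which forces $|k|\geq 3$). The rings with square-zero maximal ideal give a complete graph $K_3$ or $K_4$, for which the well-known value $\gamma^o(K_n)=\left\lceil \frac{n}{2}\right\rceil=2$ applies. The order-$27$ rings give $4$-books whose two spine vertices (the nonzero elements annihilating all of $Z(R)$) form a $\gamma^o$-alliance. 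For $\mathbb{Z}_2\times\mathbb{Z}_4$, $\mathbb{Z}_2\times\mathbb{Z}_2[X]/(X^2)$ and the remaining order-$16$ rings one exhibits an explicit two-element alliance. In every case one also checks that $\Gamma(R)$ is not a star, so $\gamma^o\neq 1$ by the characterization of rings with $\gamma^o(\Gamma(R))=1$, giving $\gamma^o=2$.

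For the forward implication I would start from a minimum alliance $S=\{a,b\}$ and extract the structural constraints it forces. Since $|S|=2$, every $v\in\overline{S}$ satisfies $1\leq \d_S(v)\leq 2$; the case $\d_S(v)=1$ forces $\d_{\overline{S}}(v)=0$ (so $v$ is a pendant of $a$ or $b$), while $\d_S(v)=2$ forces $\d_{\overline{S}}(v)\leq 1$ (so $\d(v)\leq 3$). Hence $S$ is a dominating set, every vertex outside $S$ has degree at most $3$, and the edges inside $\overline{S}$ form a matching among the common neighbors of $a$ and $b$. Translating the degree bound through $\d(v)=|Ann(v)|-1$ or $|Ann(v)|-2$, each such $v$ has $|Ann(v)|\leq 5$, which severely restricts the socle and the length of $R$.

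I would then split according to whether $R$ is reducible or local. If $R$ is a nontrivial product, I reduce to two factors: the two-field case uses $\Gamma(F\times K)=K_{|F|-1,|K|-1}$ (see \cite{AndLiv}) together with Lemma \ref{lema23} to force $R\cong\mathbb{Z}_3\times k$, and the mixed case $\mathbb{Z}_2\times R'$ is controlled by the preceding proposition ($\gamma^o(\Gamma(\mathbb{Z}_2\times R))\geq\gamma^o(\Gamma(R))+1$) and the theorem involving the number $r$ of index-$2$ nilpotents, which together pin $R'$ down to the two rings whose zero-divisor graph is a single vertex. If $R$ is local with $M=Z(R)$, the degree bound prevents any annihilator layer strictly between the socle and $M$ from appearing; this caps the order (in characteristic $3$ one cannot pass order $27$, in characteristic $2$ one cannot pass order $16$, and the residue field is forced small), after which I invoke the known enumeration of finite local rings of these small orders and compute $\gamma^o$ for each survivor, discarding the stars (which have $\gamma^o=1$) and those admitting no two-element alliance.

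The main obstacle is the completeness and bookkeeping of the local case: one must be certain that the annihilator/degree bound really excludes \emph{every} local ring beyond the listed orders, and that within those orders no ring is missed or double-counted up to isomorphism. Concretely, this means correctly separating $\mathbb{Z}_4[X]/(X^2-2)$, $\mathbb{Z}_4[X]/(X^2+2X+2)$, $\mathbb{F}_4[X]/(X^2)$ and $\mathbb{Z}_4[X]/(X^2+X+1)$ among the order-$16$ rings, and the two rings $\mathbb{Z}_9[X]/(X^2-3,3X)$ and $\mathbb{Z}_9[X]/(X^2-6,3X)$ among the order-$27$ ones, while confirming which of them yield complete graphs, $4$-books, or the spider-type graphs arising from the $\mathbb{Z}_2\times(\cdot)$ construction. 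I expect this enumeration to be the most delicate part, and leaning on the established classification of small zero-divisor graphs in \cite{AndLiv} is essential there.
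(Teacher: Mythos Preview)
Your approach is viable but quite different from the paper's. The paper gives a two-line argument: it invokes the characterization from \cite{RodSigFav} that $\gamma^o(\Gamma)=2$ if and only if $\Gamma$ is a $4$-book, observes that every $4$-book is planar, and then appeals to Smith's classification \cite{Smith} of the (finitely many) finite commutative rings with planar zero-divisor graph, filtering that list for the $4$-book ones. So both the graph-theoretic step and the ring-enumeration step are outsourced to the literature. What you propose is essentially to redo both steps by hand: your structural analysis of a two-element alliance $S=\{a,b\}$ (pendants plus common neighbours with $\d_{\overline{S}}\leq 1$) is precisely what underlies the $4$-book shape, and your local/product case split with annihilator bounds replaces the planarity filter with a direct order bound. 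This buys self-containment and avoids leaning on two external references, at the cost of the bookkeeping you rightly flag; in particular, you must also dispose of products with three or more factors and be careful that the degree condition on $\overline{S}$ alone does not immediately force adjacency to \emph{both} $a$ and $b$, so ruling out double-star-type configurations among zero-divisor graphs is part of your job. Both routes arrive at the same list, but the paper's is far shorter.
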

\begin{proof}
By \cite{RodSigFav}, $\gamma^o(\Gamma(R))=2$ if and only if $\Gamma(R)$ is a $4-$book graph. But such graphs are planar graphs, in \cite{Smith} is shown that there are only $44$ types of rings whose zero-divisor graph is planar, of which the above ones are the only that meet being $4-$book graphs.

\end{proof}


\begin{thebibliography}{X}


\bibitem{KrHeHe} \textsc{Kristiansen P.}, \textsc{Hedetniemi S.M.}, \textsc{Hedetniemi S.T.} (2004). Alliances in graphs, Journal of Combinatorial Mathematics and Combinatorial Computing 48,157--177.

\bibitem{ShaDut} \textsc{Shafique K.H.}, \textsc{Dutton R.D.} (2003). Maximum alliance-free and minimum alliance-cover sets, Congressus Numerantium 162  139--146.

\bibitem{Ragha} \textsc{Raghavendran, R.} (1969). Finite associative rings. Compositio  Mathematica, 21(2), 195-229.

\bibitem{MutMam} \textsc{Muthana, N.}, \textsc{Mamouni, A.} (2020). On defensive alliance in zero-divisor graphs. Journal of Algebra and Its Applications, 2150155.

\bibitem{Redm} \textsc{Redmond, S. P.} (2007). On zero-divisor graphs of small finite commutative rings. Discrete mathematics, 307(9-10), 1155-1166.

\bibitem{Beck} \textsc{Beck, I.} (1988). Coloring of commutative rings, J. Algebra 116 (1988) 208--226.

\bibitem{AndLiv} \textsc{Anderson, D. F.} and \textsc{Livingston,P. S.} (1999) The zero-divisor graph of a commutative ring, J. Algebra 217 434--447.

\bibitem{RodYerSig} \textsc{Rodríguez-Velázquez, J.A.}, \textsc{Yero, I.G.}, \textsc{Sigarreta, J.M.} (2009). Defensive $k$-alliances in graphs, Applied Mathematics Letter 22  96--100.

\bibitem{RodSig} \textsc{Rodríguez-Velázquez, J.A.}, \textsc{Sigarreta, J.M.} (2009). Global defensive $k$-alliances in graphs, Discrete Applied Mathematics 157 (2)  211--218.

\bibitem{SigRod} \textsc{Sigarreta, J.M.}, \textsc{Rodríguez-Velázquez, J.A.}(2006). On defensive alliances and line graphs, Applied Mathematics Letters 19 (2)  1345--1350.

\bibitem{YerRod} \textsc{Yero, I.G.}, \textsc{Rodríguez-Velázquez, J.A.} (2010). Boundary defensive $k$-alliances in graphs, Discrete Applied Mathematics 158  1205--1211.

\bibitem{YerBerRodSig} \textsc{Yero, I.G.}, \textsc{Bermudo, S.}, \textsc{Rodríguez-Velázquez, J.A.}, \textsc{Sigarreta, J.M.} (2010). Partitioning a graph into defensive $k$-alliances, Acta Mathematica Sinica, English Series 26 (11), 73--82.

\bibitem{FavFriGod} Favaron, O., Fricke, G., Goddard, W., Hedetniemi, S., Hedetniemi, S., Kristiansen, P., Skaggs, R. (2004). Offensive alliances in graphs. Discussiones Mathematicae Graph Theory, 24(2), 263--275.

\bibitem{Rad} Rad, N. J. (2018). A note on the global offensive alliances in graphs. Discrete Applied Mathematics, 250, 373--376.

\bibitem{BRSY} Bermudo, S., Rodríguez-Velázquez, J. A., Sigarreta, J. M., Yero, I. G. (2010). On global offensive $k$-alliances in graphs. Applied Mathematics Letters, 23(12), 1454--1458.

\bibitem{BriDutHay} Brigham, R. C., Dutton, R. D., Haynes, T. W., Hedetniemi, S. T. (2009). Powerful alliances in graphs. Discrete Mathematics, 309(8), 2140--2147.

\bibitem{RodSigFav} Rodriguez-Velazquez, J. A. and Sigarreta, J. M. (2007). Global alliances in planar graphs. AKCE International Journal of Graphs and Combinatorics, 4(1), 83-98. 

\bibitem{Smith} Smith, N.O. (2003). Planar zero-divisor graphs. Int. J. Commut. Rings 2(4), 177--188.

%
%
%
%
%
%
%
%
%
%
%
%
%
\end{thebibliography}
\end{document}